\DeclareMathAlphabet{\mathcal}{OMS}{cmsy}{m}{n}
\DeclareSymbolFont{largesymbols}{OMX}{cmex}{m}{n}
\newtheorem{theorem}{Theorem}[section]
\newtheorem{prop}[theorem]{\rm \textsc{Proposition}}
\newtheorem{lem}[theorem]{\rm \textsc{Lemma}}
\newtheorem{coro}[theorem]{\rm \textsc{Corollary}}
\newtheorem{thm}[theorem]{\it \textsc{Theorem}}
\newtheorem{rem}[theorem]{\rm \textsc{Remark}}
\newtheorem{exam}[theorem]{\rm\textsc{Example}}
\newcommand{\N}{\mathbb{N}}
\newcommand{\F}{\mathbb{F}}
\newcommand{\ra}{\longrightarrow}
\newcommand{\GL}{{\rm GL}}
\newcommand{\SL}{{\rm SL}}
\newcommand{\U}{{\rm U}}
\newcommand{\diag}{{\rm diag}}
\newcommand{\B}{{\mathcal B}}
\newcommand{\D}{{\mathcal D}}
\newcommand{\A}{\mathcal{A}}
\newcommand{\sign}{{\rm sign}}
\begin{document}
\setlength{\oddsidemargin}{0cm}
\setlength{\evensidemargin}{0cm}

\title{\scshape Vector invariant fields of finite classical groups}
\author{\scshape Yin Chen}
\address{School of Mathematics and Statistics, Northeast Normal University, Changchun 130024, P.R. China}
\email{ychen@nenu.edu.cn}

\author{\scshape Zhongming Tang}
\address{Department of Mathematics, Soochow (Suzhou) University, Suzhou 215006, P.R. China}
\email{zmtang@suda.edu.cn}

\date{\today}
\def\shorttitle{Vector invariant fields of finite classical groups}

\begin{abstract}
Let $W$ be an $n$-dimensional vector space over a finite field $\F_q$ of any characteristic and $mW$ denote the direct sum of $m$ copies of $W$.
Let $\F_q[mW]^{\GL(W)}$ and $\F_q(mW)^{\GL(W)}$ denote the vector invariant ring and vector invariant field respectively where $\GL(W)$ acts on $W$ in the standard way and acts on $mW$ diagonally.
We prove that there exists a set of homogeneous invariant polynomials $\{f_{1},f_{2},\ldots,f_{mn}\}\subseteq \F_q[mW]^{\GL(W)}$ such that $\F_q(mW)^{\GL(W)}=\F_q(f_{1},f_{2},\ldots,f_{mn})$. We also prove the same assertions for the special linear groups and  the symplectic groups in any characteristic, and the unitary groups and the orthogonal groups  in odd characteristic.
\end{abstract}

\subjclass[2010]{13A50.}
\keywords{invariant field; vector invariant; rationality.}

\maketitle
\baselineskip=17pt


\dottedcontents{section}[1.16cm]{}{1.8em}{5pt}
\dottedcontents{subsection}[2.00cm]{}{2.7em}{5pt}
\dottedcontents{subsubsection}[2.86cm]{}{3.4em}{5pt}


\section{Introduction}
\setcounter{equation}{0}
\renewcommand{\theequation}
{1.\arabic{equation}}
\setcounter{theorem}{0}
\renewcommand{\thetheorem}
{1.\arabic{theorem}}

For a finite group $G$ and an $n$-dimensional representation $W$ over a field $\F$,
the invariant ring $\F[W]^G$ and the invariant field $\F(W)^G$ are two main objects of study in the invariant theory of finite groups. The rationality problem for $\F(W)^G$ associated with the name of Emmy Noether, has been studied extensively since Swan's counterexample \cite{Swa1969} appeared. Motivated by connecting the rationality of $\F(W)^G$ to characterization of the structure of $\F[W]^G$, one seeks to find a generating set of polynomial invariants for $\F(W)^G$; see Richman \cite{Ric1990} and Chen-Wehlau \cite{CW2017}. More precisely, one asks whether there exist homogenous polynomials $f_1,f_2,\dots,f_n\in \F[W]^G$ such that $\F(W)^G=\F(f_1,f_2,\dots,f_n)$; see Charnow  \cite{Cha1969}, Kang \cite{Kan2006}, Campbell-Chuai \cite{CC2007}, and Chen-Wehlau \cite{CW2019}. The goal of this paper is to answer this question for certain modular vector invariant fields of finite classical groups.

We let $\F_q$ be a finite field of order $q$ with characteristic $p>0$ and
$W$ be an $n$-dimensional vector space over $\F_{q}$.  Let $\U(W)$ denote the Sylow $p$-subgroup of the group
$G(W)$ where $\SL(W)\leq G(W)\leq \GL(W)$. Consider the dual space $W^*$ of $W$ and the direct sum $mW\oplus dW^*$ of $m$ copies of $W$ and $d$ copies of $W^*$. Let $G$ be one of $\{\U(W),\GL(W),\SL(W)\}$ acting on
$mW\oplus dW^*$ diagonally. The previous paper of the first author \cite{CW2019}, together with \cite{CC2007}, shows that there exist homogenous polynomials $f_1,f_2,\dots,f_{(m+d)n}\in \F_q[mW\oplus dW^*]^G$ such that
the vector invariant field $\F_q(mW\oplus dW^*)^{G}=\F_q(f_1,f_2,\dots,f_{(m+d)n})$  for all cases except when
$md=0$ and $G=\GL(W)$ or $G=\SL(W)$. 
Our proof relied upon some relations among a generating set for the vector invariant ring 
$\F_{q}[W\oplus W^{*}]^{\GL(W)}$; see Chen-Wehlau \cite{CW2017}.
However, since the structure of $\F_{q}[W\oplus W]^{\GL(W)}$ is not well understood, it seems that the method we used in Chen-Wehlau \cite{CW2019} can not be applied directly to the question of the remaining case which asks how to find a generating set of polynomial invariants for $\F_q(mW)^{\GL(W)}$ or $\F_q(mW)^{\SL(W)}$ for $m\in\N^{+}$.

To deal with the remaining case, we need to recall a result due to   Steinberg  that provides a generating set $\{\ell_{ij}/\ell_0\mid 1\leqslant i\leqslant m,1\leqslant j\leqslant n\}$ of rational invariants for $\F_q(mW)^{\GL(W)}$; see Steinberg \cite[Corollary]{Ste1987}.

\begin{thm}[Steinberg]\label{Steinberg1} There exist $mn+1$ homogeneous polynomials $\{\ell_0,\ell_{ij}\mid 1\leq i\leq m,1\leq i\leq n\}\subseteq \F_q[mW]^{\SL(W)}$ such that $\F_q(mW)^{\GL(W)}=\F_q(\ell_{ij}/\ell_0\mid 1\leqslant i\leqslant m,1\leqslant j\leqslant n)$.
\end{thm}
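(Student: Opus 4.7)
The plan is to prove the theorem by reducing it to a Galois-degree calculation and then constructing the required $\SL(W)$-invariants via a Dickson-type polynomial. First I would fix a basis $\{e_1,\ldots,e_n\}$ of $W$ and write $w_i=\sum_{j=1}^{n}x_{ij}e_j$, so that $\F_q[mW]=\F_q[x_{ij}]$. Since the $\GL(W)$-action on $\F_q(mW)$ is faithful for $m\geq 1$, Artin's theorem yields $[\F_q(mW):\F_q(mW)^{\GL(W)}]=|\GL(W)|$. Therefore, if I can exhibit homogeneous $\SL(W)$-invariants $\ell_0,\ell_{ij}\in\F_q[mW]^{\SL(W)}$, all of a common degree $D$, such that $[\F_q(x_{ij}):\F_q(\ell_{ij}/\ell_0)]=|\GL(W)|$, then the tower formula forces the inclusion $\F_q(\ell_{ij}/\ell_0)\subseteq\F_q(mW)^{\GL(W)}$ to be equality.

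Second, I would justify why $\SL(W)$-invariants (rather than $\GL(W)$-invariants) are the natural objects. The short exact sequence $1\to\SL(W)\to\GL(W)\to\F_q^{\times}\to 1$ is represented by scalar matrices $cI$, which act on a homogeneous element of $\F_q[mW]^{\SL(W)}$ of degree $d$ by the scalar $c^{-d}$. Consequently a ratio of two homogeneous $\SL(W)$-invariants of equal degree is automatically $\GL(W)$-invariant. This frees one from the restrictive degrees of $\GL(W)$-invariants (multiples of $q-1$) and allows the richer supply of $\SL(W)$-invariants to be used to produce a uniform family of $mn+1$ generators with matching degrees.

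Third, the heart of the argument is the construction of the $\ell$'s by imitating Dickson. Recall that Dickson's theorem produces $\GL(W)$-invariants on $W$ as the coefficients in $T$ of the additive polynomial $\prod_{v\in W^{\ast}}(T-v)\in\F_q[W][T]$. For the vector case I would introduce an auxiliary indeterminate $T$ together with parameters indexed by the pairs $(i,j)$, form variants of the product $\prod_{v\in W^{\ast}}(T-\phi(v))$ where $\phi$ is a twisted generic linear combination of $w_1,\ldots,w_m$ built from the $x_{ij}$, and extract from the coefficients $mn+1$ homogeneous $\SL(W)$-invariants of common degree. A natural choice for $\ell_0$ is the "untwisted" coefficient (which is a $\GL(W)$-invariant), with each $\ell_{ij}$ a systematic $(i,j)$-twist.

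The main obstacle I anticipate is the verification $[\F_q(x_{ij}):\F_q(\ell_{ij}/\ell_0)]=|\GL(W)|$. This should reduce to showing that each coordinate $x_{ij}$ satisfies, over the ratio field, an explicit additive polynomial whose roots are exactly the $\GL(W)$-translates of a generic vector --- the vectorial analogue of the Dickson factorization. The case $m\geq n$ is softer because maximal $n\times n$ minors of $X=(x_{ij})$ are $\SL(W)$-semi-invariants whose ratios already cut out the orbit structure, but the case $m<n$ admits no such minors and the argument must rest entirely on the Dickson-type product; achieving a uniform statement across both ranges is the delicate point.
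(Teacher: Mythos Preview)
Your proposal has genuine gaps at two points.

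First, the claim that a ratio of two homogeneous $\SL(W)$-invariants of equal degree is automatically $\GL(W)$-invariant is not correct. Scalar matrices $cI$ have $\det(cI)=c^{n}$, so they surject onto $\GL(W)/\SL(W)\cong\F_q^{\times}$ only when $\gcd(n,q-1)=1$; and in any case the degree-$d$ piece of $\F_q[mW]^{\SL(W)}$ is usually more than one-dimensional, so $\GL(W)$ need not act on it by a scalar. What the paper actually uses is that each $\ell_{ij}$ is a $\det$-relative invariant, i.e.\ $\sigma\cdot\ell_{ij}=\det(\sigma)\,\ell_{ij}$; this is a feature of the specific construction as an $n\times n$ determinant whose columns each transform by $\sigma$, not a consequence of $\SL$-invariance plus homogeneity.

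Second, your construction via ``variants of $\prod_{v\in W^{*}}(T-\phi(v))$ with twisted linear combinations'' is not made concrete, and you yourself flag the degree equality $[\F_q(x_{ij}):\F_q(\ell_{ij}/\ell_0)]=|\GL(W)|$ as the main obstacle without a plan for it. The paper does neither of these things. Its $\ell_{ij}$ are explicit determinants: for $m\geq n$, $L_0=(X_1,\dots,X_n)$, $\ell_0=\det L_0$, and $\ell_{ij}$ is obtained by replacing the $j$-th column by $X_i^{q}$ (if $i\leq n$) or $X_i$ (if $i>n$). The generation statement is \emph{not} proved by a direct degree count but by a Galois sandwich: Lemma~\ref{lem2.6} shows the Dickson invariants $c_{ns}^{(i)}$ for $\widetilde G=\GL(W)^{n}$ lie in $E=\F_q(\ell_{ij}/\ell_0)$, giving $\F_q(nW)^{\widetilde G}\subseteq E\subseteq\F_q(nW)^{\GL(W)}$; one then proves that any $\diag(\sigma_1,\dots,\sigma_n)\in\widetilde G$ fixing $E$ must have $\sigma_1=\cdots=\sigma_n$. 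The case $m<n$ is handled not by a uniform Dickson product but by padding $(X_1,\dots,X_m)$ with Frobenius powers of $X_m$ to a square matrix and specializing from the $m=n$ case via a localized-polynomial-ring criterion (Propositions~\ref{prop3.1}--\ref{prop3.2}).
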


However, the original proof of Steinberg's Theorem was extremely short and seems to be not well-readable to us. The first propose of this paper is to give an elementary proof to Steinberg's Theorem, without going into the theory of algebraic groups but Galois theory and localizations in commutative algebra. Our proof is more understandable than Steinberg's one and further it
provides a sample that how to use Galois theory to find a generating set of polynomials invariants for vector invariant fields; see Section \ref{Section2}.

After giving a proof to  Theorem \ref{Steinberg1} for the case $m\geq n$ in Section \ref{Section2}, we develop 
a useful criterion to detect when an invariant ring is a localized polynomial ring in Section \ref{Section3}. As an application, 
we prove Theorem \ref{Steinberg1} for the case $m<n$.
We also  provide several applications of  Theorem \ref{Steinberg1}. In particular,  we prove the following result whose proof will be separated into Theorem \ref{thm3.3} and Corollary \ref{coro3.4}.


\begin{thm}\label{thm1.2}
Let $m\in\N^{+}$ and $W$ be an $n$-dimensional vector space over a finite field $\F_{q}$. Suppose
 $G\in \{\GL(W),\SL(W)\}$. Then there exist a set of homogeneous invariant polynomials $\{f_{1},f_{2},\dots,f_{mn}\}\subseteq \F_{q}[mW]^{G}$ such that
$\F_{q}(mW)^{G}=\F_{q}(f_{1},f_{2},\dots,f_{mn})$.
\end{thm}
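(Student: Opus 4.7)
The plan is to apply Steinberg's Theorem~\ref{Steinberg1} to obtain $mn$ rational generators $\ell_{ij}/\ell_0$ for $\F_q(mW)^{\GL(W)}$, clear denominators using the $\GL(W)/\SL(W) \cong \F_q^\times$ character action to produce $mn$ polynomial candidates, and then invoke the localized-polynomial-ring criterion of Section~\ref{Section3} to verify they generate the invariant field.

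Since each $\ell_0, \ell_{ij} \in \F_q[mW]^{\SL(W)}$, the group $\GL(W)$ acts on each via a character $\chi$ of $\GL(W)/\SL(W) \cong \F_q^\times$, and the $\GL(W)$-invariance of every ratio $\ell_{ij}/\ell_0$ forces these characters to coincide. Let $c := \operatorname{ord}(\chi)$, so $c \mid q-1$. Then $f_{ij} := \ell_{ij}\ell_0^{\,c-1}$ and $h := \ell_0^{\,c}$ are all polynomial $\GL(W)$-invariants, their total characters being trivial. I would take $\{f_{ij} : 1 \leq i \leq m,\, 1 \leq j \leq n\}$ as the $mn$ candidate generators for the $\GL(W)$ case (Theorem~\ref{thm3.3}). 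The ratios $f_{ij}/f_{11} = \ell_{ij}/\ell_{11}$ put $mn-1$ of the Steinberg cross-ratios into $\F_q(f_{ij})$, and the crux is to show $\ell_{11}/\ell_0 \in \F_q(f_{ij})$. Here I would invoke the Section~\ref{Section3} criterion: it should identify the localization $\F_q[mW]^{\GL(W)}[f_{11}^{-1}]$ as a polynomial ring in the $f_{ij}$'s, from which $\F_q(mW)^{\GL(W)} = \F_q(f_{ij})$ follows upon passing to fraction fields.

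For the $\SL(W)$ case (Corollary~\ref{coro3.4}), I would use that $\F_q(mW)^{\SL(W)}$ is the degree-$c$ extension of $\F_q(mW)^{\GL(W)}$ generated by $\ell_0$ (since $\ell_0^{\,c}$ is $\GL(W)$-invariant while $\ell_0$ has $\GL(W)$-character of exact order $c$). Adjoining the polynomial $\SL(W)$-invariant $\ell_0$ to $\{f_{ij}\}$ and using $f_{ij} = \ell_{ij}\ell_0^{\,c-1}$ to recover each $\ell_{ij}$, we obtain the $mn+1$ polynomial $\SL(W)$-invariants $\{\ell_0\} \cup \{\ell_{ij}\}$. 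Since $\ell_{11}/\ell_0 \in \F_q(mW)^{\GL(W)}$ is already a rational function in the $f_{ij}$'s, and hence in $\{\ell_0\} \cup \{\ell_{ij} : (i,j)\neq (1,1)\}$, we may drop $\ell_{11}$, leaving $\{\ell_0\} \cup \{\ell_{ij} : (i,j)\neq (1,1)\}$ as a set of $mn$ polynomial $\SL(W)$-invariants generating $\F_q(mW)^{\SL(W)}$. The main obstacle is the $\GL$-case descent step, where the Section~\ref{Section3} criterion is essential: a direct proof that $\ell_{11}/\ell_0 \in \F_q(f_{ij})$ would require explicit algebraic relations among Steinberg's determinantal polynomials, which the criterion is designed to sidestep by instead establishing a structural polynomial-ring presentation of the localization.
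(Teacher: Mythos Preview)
Your proposal has a genuine gap in the $\GL(W)$ case: the set $\{f_{ij} = \ell_0^{q-2}\ell_{ij} : 1 \leq i \leq m,\ 1 \leq j \leq n\}$ does \emph{not} generate $\F_q(mW)^{\GL(W)}$. Writing $u_{ij} = \ell_{ij}/\ell_0$ and $v_{ij} = u_{ij}/u_{11}$, Lemma~\ref{lem2.7} gives $\ell_0^{q-1} = \det(u_{ij})_{i,j\leq n} = u_{11}^{\,n}\det(v_{ij})_{i,j\leq n}$, hence $f_{ij} = \ell_0^{q-1}u_{ij} = u_{11}^{\,n+1}\det(v_{ij})\,v_{ij}$. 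Since $f_{ij}/f_{11} = v_{ij}$, one finds $\F_q(f_{ij}) = \F_q(v_{ij},\,u_{11}^{\,n+1})$, whereas $\F_q(mW)^{\GL(W)} = \F_q(u_{ij}) = \F_q(v_{ij},\,u_{11})$; the polynomial $T^{\,n+1} - u_{11}^{\,n+1}$ is Eisenstein over $\F_q(v_{ij})[u_{11}^{\,n+1}]$, so the index is exactly $n+1$. Moreover, your appeal to the Section~\ref{Section3} criterion is circular: Proposition~\ref{prop3.1} takes $\F(V)^G = \F(f_1,\ldots,f_k)$ as a \emph{hypothesis} and deduces a localized-ring presentation, not the other way around.

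The paper's route in Theorem~\ref{thm3.3} is precisely the ``explicit algebraic relation'' you hoped to avoid. Lemma~\ref{lem2.7} expresses $\ell_0^{q-1}$ as the determinant $\det(\ell_{ij}/\ell_0)_{i,j\leq n}$, which is linear in each entry; this lets one solve for any chosen $\ell_{i_0 j_0}/\ell_0$ (with $i_0 \leq \min\{m,n\}$) in terms of $\ell_0^{q-1}$ and the remaining ratios. The correct $mn$-element generating set is therefore $\{\ell_0^{q-1}\} \cup \{f_{ij} : (i,j) \neq (i_0,j_0)\}$, i.e.\ one $f_{ij}$ is swapped for $\ell_0^{q-1}$ rather than simply dropped. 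With the $\GL(W)$ case corrected in this way, your $\SL(W)$ argument (adjoin $\ell_0$ and drop one $\ell_{ij}$ via the same relation) matches Corollary~\ref{coro3.4}.
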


Combining this result with \cite[Theorem 1.2]{CW2019} we obtain

\begin{coro}\label{coro1.3}
Let $m,d\in\N$ and $W$ be an $n$-dimensional vector space over a finite field $\F_{q}$. Suppose
 $G\in \{\GL(W),\SL(W),\U(W)\}$. Then there exist a set of homogeneous invariant polynomials
 $\{f_{1},f_{2},\dots,f_{(m+d)n}\}\subseteq \F_{q}[mW\oplus dW^{*}]^{G}$ such that
$\F_{q}(mW\oplus dW^{*})^{G}=\F_{q}(f_{1},f_{2},\dots,f_{(m+d)n})$. In particular, $\F_{q}[mW\oplus dW^{*}]^{G}$  is
a localized polynomial ring, i.e., there exists an element $f\in \F_{q}[mW\oplus dW^{*}]^{G}$ such that
$\F_{q}[mW\oplus dW^{*}]^{G}[f^{-1}]=\F_{q}[f_{1},f_{2},\dots,f_{(m+d)n}][f^{-1}]$.
\end{coro}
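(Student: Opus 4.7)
The plan is to deduce Corollary \ref{coro1.3} by combining Theorem \ref{thm1.2} with \cite[Theorem 1.2]{CW2019}, and then to derive the ``localized polynomial ring'' assertion by a general fraction-field argument. First I would split into cases. If $md\neq 0$, or if $md=0$ and $G=\U(W)$, then \cite[Theorem 1.2]{CW2019} directly produces $(m+d)n$ homogeneous invariants that generate $\F_q(mW\oplus dW^*)^G$. If $d=0$ and $G\in\{\GL(W),\SL(W)\}$, Theorem \ref{thm1.2} supplies the generators. The remaining case $m=0$ with $G\in\{\GL(W),\SL(W)\}$ would be reduced to the $d=0$ situation by contragredient duality: composing the standard representation on $W^*$ with the involution $g\mapsto (g^T)^{-1}$ of $\GL(W)$ (respectively $\SL(W)$) identifies $\F_q(dW^*)^G$ with $\F_q(dW)^G$, and the latter falls under the previous case.

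For the ``in particular'' statement I would set $A:=\F_q[f_1,\dots,f_{(m+d)n}]$ and $B:=\F_q[mW\oplus dW^*]^G$, so that $A\subseteq B$. Since $G$ is finite, the common fraction field $\F_q(mW\oplus dW^*)^G$ has transcendence degree $(m+d)n$ over $\F_q$; this forces the $f_i$'s to be algebraically independent, so $A$ is genuinely a polynomial ring. By Noether's finiteness theorem for invariants of finite groups, $B$ is a finitely generated $\F_q$-algebra, so I may pick generators $b_1,\dots,b_k$ of $B$ and, using that $A$ and $B$ share the same fraction field, write $b_i=p_i/q_i$ with $p_i,q_i\in A$. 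Taking $f:=q_1q_2\cdots q_k\in A\subseteq B$, every $b_i$ then lies in $A[f^{-1}]$, whence $B[f^{-1}]\subseteq A[f^{-1}]$, while $A\subseteq B$ immediately gives the reverse inclusion.

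I do not anticipate a serious obstacle in this corollary: the substantive rationality content is carried by Theorem \ref{thm1.2} and \cite[Theorem 1.2]{CW2019}, which are established via the Galois-theoretic and localization arguments developed in Sections \ref{Section2} and \ref{Section3}. The only points that demand modest care are the reduction of the $m=0$ case to the $d=0$ case via the contragredient duality, and the verification that the localizing denominator $f$ can be chosen inside $A$ rather than merely in $B$.
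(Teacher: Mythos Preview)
Your proposal is correct and follows essentially the same route as the paper: the paper likewise splits off the $\U(W)$ case (citing \cite{CC2007}), invokes Theorem~\ref{thm1.2} and \cite[Theorem 1.2]{CW2019} for the remaining cases, handles the $m=0$ situation for $\GL(W),\SL(W)$ via the same contragredient identification (citing \cite[Example 1.2]{BK2011} rather than spelling it out), and proves the localization statement by the identical common-denominator trick applied to a finite generating set of the invariant ring.
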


Section \ref{Section4} is devoted to finding a minimal generating set of polynomial invariants for the vector invariant field of
other finite classical groups, such as symplectic, unitary and  orthogonal groups. Let $O(W)$ be the orthogonal group over a finite field $\F_{q}$ of odd characteristic with the standard representation $W$. We prove an analogue of Theorem \ref{thm1.2} for $O(W)$, i.e., we find
$f_{1},f_{2},\dots,f_{mn}\in \F_{q}[mW]^{O(W)}$ such that
$\F_{q}(mW)^{O(W)}=\F_{q}(f_{1},f_{2},\dots,f_{mn})$ where $n=\dim(W)$ and $m\in\N^{+}$.
We also derive similar conclusions for the finite unitary and symplectic groups; see Theorem \ref{thm4.1} for details.

\begin{rem}{\rm Let $G\subseteq \GL(W)$ be a subgroup for which $\F_q(W)^{G}$ is rational over $\F_{q}$.
Then the rationality of $\F_q(mW)^{G}$ can be seen by directly applying the so-called ``No-name Lemma''; see for example Jensen-Ledet-Yui \cite[Section 1.1, page 22]{JLY2002}.
}\end{rem}

\section{Dickson Invariants and Steinberg's Theorem}\label{Section2}
\setcounter{equation}{0}
\renewcommand{\theequation}
{2.\arabic{equation}}
\setcounter{theorem}{0}
\renewcommand{\thetheorem}
{2.\arabic{theorem}}

The main purpose of this section is to give a proof to Theorem  \ref{Steinberg1} in the case $m\geq n$. We need to recall the classical Dickson invariants and extend Steinberg's construction.  Suppose $\F_{q}[W]=\F_{q}[x_{1},x_{2},\dots,x_{n}]$ and consider the following $n\times n$ matrix
$$D_{ni}:=\begin{pmatrix}
     x_{1} &x_{1}^{q}&\cdots&\widehat{x_{1}^{q^{i}}} &\cdots& x_{1}^{q^{n}}  \\
     x_{2} &x_{2}^{q}&\cdots&\widehat{x_{2}^{q^{i}}} &\cdots& x_{2}^{q^{n}}  \\
     \vdots&\vdots&\cdots&\vdots&\cdots&\vdots\\
    x_{n} &x_{n}^{q}&\cdots&\widehat{x_{n}^{q^{i}}} &\cdots& x_{n}^{q^{n}}  \\
\end{pmatrix}$$ where $\widehat{\ast}$ denotes the symbol $\ast$ was deleted.
Define $d_{ni}:=\det(D_{ni})$ and $c_{ni}:=d_{ni}/d_{nn}$ for $0\leqslant i\leqslant n.$ Then
$\F_{q}[W]^{\GL(W)}=\F_{q}[c_{n0},c_{n1},\dots,c_{n,n-1}]$ and $\F_{q}[W]^{\SL(W)}=\F_{q}[d_{nn},c_{n1},\dots,c_{n,n-1}]$
are polynomial algebras over $\F_{q}$; see Dickson \cite{Dic1911} for the original proofs or Wilkerson \cite{Wil1983} for a modern treatment.

Throughout this paper, $^{t}\ast$ stands for the transpose of a matrix (or vector) $\ast$.
Suppose that $\F_{q}(mW)=\F_{q}(x_{ij}\mid 1\leq i\leq m,1\leq j\leq n)$
and we are working over $\F_{q}(mW)$. The constructions will be separated into two subcases.

(1) We first suppose $m\geq n\geq 1$. Extending Steinberg's construction \cite[Section 3]{Ste1987},
for $1\leq i\leq m$, we define $X_{i}$ to be the column vector $^{t}(x_{i1},x_{i2},\dots,x_{in})$ and for $k\in{\mathbb N}$, we define $X_{i}^{q^k}:=~^{t}(x_{i1}^{q^k},x_{i2}^{q^k},\dots,x_{in}^{q^k})$ to be the $q^{k}$-th power of $X_{i}$. Consider the $n\times m$ matrix $L=(X_{1},X_{2},\dots,X_{m})$ and an $n\times n$-submatrix $L_{0}=(X_{1},X_{2},\dots,X_{n})$. For $1\leq j\leq n$ and $k\in{\mathbb N}$, if $i\leq n$, we define
$L_{ij}^{(k)}$ to be the matrix obtained from $L_{0}$ by replacing the $j$-th column $X_{j}$ of  $L_{0}$ by the $q^{k}$-th power  $X_{i}^{q^k}$ of the $i$-th column of $L_{0}$; if $n<i\leq m$, we define
$L_{ij}^{(k)}$ to be the matrix obtained from $L_{0}$ by replacing the $j$-th column $X_{j}$ of  $L_{0}$ by the $i$-th column $X_{i}$ of  $L$. Namely,
$$L_{ij}^{(k)}=\begin{cases}
   (X_{1},\dots,X_{j-1},X_{i}^{q^k},X_{j+1},\dots,X_{n}), & \text{if } i\leq n; \\
 (X_{1},\dots,X_{j-1},X_{i},X_{j+1},\dots,X_{n}), & \text{if } n<i\leq m.
\end{cases}
$$
Let $\ell_{0}=\det(L_{0})$ and $\ell_{ij}^{(k)}=\det(L_{ij}^{(k)})$. Note that when $n<i\leq m$, $\ell_{ij}^{(k)}$ is independent of $k$.
To coincide with Steinberg's notation, we denote $\ell_{ij}^{(1)}$ by $\ell_{ij}$. We observe that
every $\ell_{ij}^{(k)}$ is a $\det$-invariant, i.e., $\sigma(\ell_{ij}^{(k)})=\det(\sigma)\ell_{ij}^{(k)}$
for all $\sigma\in{\rm GL}(W)$. Now Theorem \ref{Steinberg1} states that $\F_{q}(mW)^{\GL(W)}=\F_{q}(\ell_{ij}/\ell_{0}\mid 1\leq i\leq m,1\leq j\leq n)$.

(2) Secondly, we suppose that $m<n$. We add some $q^{k}$-th powers of $X_{m}$ into the $n\times m$-matrix $(X_{1},X_{2},\dots,X_{m})$ such that we may obtain an $n\times n$-matrix $$L=(X_{1},X_{2},\dots,X_{m},X_{m}^{q},\dots,X_{m}^{q^{n-m}}).$$ We consider the following matrix
$$L_{0} = (X_{1},X_{2},\dots,X_{m-1},X_{m}^{q^{n-m}},X_{m},X_{m}^{q},\dots,X_{m}^{q^{n-m-1}})$$
which is obtained from $L$ by moving the last column $X_{m}^{q^{n-m}}$ of $L$ forward to the $m$-th column.
To derive $\ell_{ij}/\ell_{0}$, we use the same construction appeared in the previous case. Namely, for $1\leq i, j\leq n$, we define $L_{ij}$ to be the matrix obtained from $L_{0}$ by replacing the $j$-th column of  $L_{0}$ by the $q$-th power  of the $i$-th column of $L_{0}$.
Let $\ell_{0}=\det(L_{0})$ and $\ell_{ij}=\det(L_{ij})$.
By Dickson's theorem we see that $\det(X_{1},X_{1}^{q},\dots,X_{1}^{q^{n-1}})$ is not zero,  thus $\ell_{0}\neq 0$.
We observe that for $m+1\leq i\leq n$, $\ell_{ij}/\ell_{0}$ is constant.
Thus Theorem \ref{Steinberg1} in this case also could be read to 
$\F_{q}(mW)^{\GL(W)}=\F_{q}(\ell_{ij}/\ell_{0}\mid 1\leq i\leq m,1\leq j\leq n)$.

We make a couple of remarks to explain above constructions. 

\begin{rem}{\rm
In the original construction of the case $m<n$ (\cite[page 704, the second paragraph]{Ste1987}), Steinberg 
took $$L_{0}=L=(X_{1},X_{2},\dots,X_{m},X_{m}^{q},\dots,X_{m}^{q^{n-m}})$$ and obtained 
a matrix $^{t}(\ell_{ij}/\ell_{0})_{n\times n}$ in which all the last $n-m+1$ columns are constant except for the $n$-th column. 
This means that the Steinberg's construction eventually derived that 
$\F_{q}(mW)^{\GL(W)}=\F_{q}(\ell_{ij}/\ell_{0}, \ell_{nj}/\ell_{0}\mid 1\leq i\leq m-1,1\leq j\leq n)$.
There are no essential difference between the Steinberg's original construction and the one mentioned above. 
}\end{rem}

\begin{rem}{\rm
Note that the $\ell_{ij}$ (or $\ell_{0}$) in Theorem \ref{Steinberg1} are different for the case $m\geq n$  and the case $m<n$. 
For example, setting $n=3$, we consider $\F_q(3W)^{{\rm GL}(W)}$ and $\F_q(2W)^{{\rm GL}(W)}$.
By above constructions, the $\ell_{12}$ in $\F_q(3W)^{{\rm GL}(W)}$ is
$\det(X_{1},X_{1}^{q},X_{3})$ as well as  the $\ell_{12}$ in $\F_q(2W)^{{\rm GL}(W)}$
is $\det(X_{1},X_{1}^{q},X_{2})$.
}\end{rem}

\begin{rem}{\rm
For the special case $m=1$, the above-mentioned construction, up to permutation and sign,  may produce the generating set $\{c_{n0},c_{n1},\dots,c_{n,n-1}\}$ for the classical Dickson algebra we have seen at the beginning of this section. Furthermore, Magma calculation \cite{magma} shows that $\ell_{ij}/\ell_{0}$   in Theorem \ref{Steinberg1} might not be polynomial for $m\geq 2$.
}\end{rem}

The following lemma indicates that the proof of Theorem  \ref{Steinberg1} for the case $m\geq n$ could be reduced  to the case $m=n$.

\begin{lem}\label{lem2.4}
Let $K=\F_q(x_{ij}\mid 1\leq i,j\leq n)$. If $m>n$, then $\F_q(mW)^{\GL(W)}=K^{\GL(W)}(\ell_{kj}/\ell_0\mid n+1\leqslant k\leqslant m,1\leqslant j\leqslant n)$.
\end{lem}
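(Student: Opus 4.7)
The plan is to realize $\F_{q}(mW)$ as a purely transcendental extension of $K$ in the $(m-n)n$ generators $\ell_{kj}/\ell_{0}$ ($n+1\leq k\leq m,\ 1\leq j\leq n$), on which $\GL(W)$ acts trivially; the lemma then falls out of a linear-disjointness argument of ``No-name Lemma'' flavor.

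First I would establish generation. Since $\ell_{0}=\det(X_{1},\dots,X_{n})\in K$, and for $k>n$ the determinant $\ell_{kj}=\det(X_{1},\dots,X_{j-1},X_{k},X_{j+1},\dots,X_{n})$ depends $K$-linearly on the coordinates $x_{k1},\dots,x_{kn}$, Cramer's rule applied to the system $L_{0}v=X_{k}$ (solvable over $K$ because the $x_{ij}$ are algebraically independent indeterminates, so $\ell_{0}\neq 0$) yields the identity $X_{k}=L_{0}\cdot {}^{t}(\ell_{k1}/\ell_{0},\dots,\ell_{kn}/\ell_{0})$. Hence every coordinate $x_{ki}$ with $n<k\leq m$ lies in $K(\ell_{kj}/\ell_{0}\mid 1\leq j\leq n)$, giving
\[\F_{q}(mW)=K(\ell_{kj}/\ell_{0}\mid n+1\leq k\leq m,\ 1\leq j\leq n).\]
A transcendence-degree count over $\F_{q}$ (namely $mn$ for the left side and $n^{2}$ for $K$) then forces the $(m-n)n$ new generators to be algebraically independent over $K$, so the extension $\F_{q}(mW)/K$ is purely transcendental in them.

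For the Galois descent, each $\ell_{kj}/\ell_{0}$ is a $\GL(W)$-invariant, so the action of $\GL(W)$ on $\F_{q}(mW)$ restricts to its standard action on $K$ and fixes every new generator. Because $\GL(W)$ acts faithfully on the ``generic matrix'' field $K$, Artin's theorem gives $[K:K^{\GL(W)}]=|\GL(W)|$. Setting $L:=K^{\GL(W)}(\ell_{kj}/\ell_{0}\mid n+1\leq k\leq m,\ 1\leq j\leq n)$, the field $L$ is purely transcendental over $K^{\GL(W)}$, hence linearly disjoint from the algebraic extension $K/K^{\GL(W)}$. Consequently $[\F_{q}(mW):L]=[K:K^{\GL(W)}]=|\GL(W)|$ and $\F_{q}(mW)/L$ is Galois with group $\GL(W)$, so its fixed field is exactly $L=\F_{q}(mW)^{\GL(W)}$. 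The only step requiring real care is the Cramer-rule identification of the $\ell_{kj}/\ell_{0}$ ($k>n$) with a second set of coordinates for the last $m-n$ copies of $W$; once this is in place the rest is formal Galois-theoretic bookkeeping.
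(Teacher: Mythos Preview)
Your argument is correct and follows essentially the same route as the paper: both use Cramer's rule on $L_{0}Y=X_{k}$ to show $\F_{q}(mW)=K(\ell_{kj}/\ell_{0}\mid n+1\leq k\leq m,\,1\leq j\leq n)$, note that each $\ell_{kj}/\ell_{0}$ is $\GL(W)$-invariant, and then pass to invariants. The only difference is that the paper concludes with a terse ``Thus the statement holds,'' whereas you spell out the descent step via linear disjointness and Artin's theorem; your version is the more careful justification of that last sentence rather than a different idea.
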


\begin{proof} For any integer $k\in\{n+1,\dots, m\}$, the Cramer's rule implies that the non-homogenous linear equations $L_{0}\cdot Y=X_{k}$
has a unique common solution $Y_k=~^{t}(\frac{\ell_{k1}}{\ell_0},\frac{\ell_{k2}}{\ell_0},\dots, \frac{\ell_{kn}}{\ell_0})$. This means that every $x_{kj}$ can be expressed linearly by
$\{\frac{\ell_{k1}}{\ell_0},\frac{\ell_{k2}}{\ell_0},\dots, \frac{\ell_{kn}}{\ell_0}\}$ over $K$ for $1\leq j\leq n$. Hence, $K(x_{k1},x_{k2},\dots,x_{kn})=K(\frac{\ell_{k1}}{\ell_0},\frac{\ell_{k2}}{\ell_0},\dots, \frac{\ell_{kn}}{\ell_0})$. As $\{x_{kj}\mid n+1\leq k\leq m,1\leq j\leq n\}$ is a set of algebraic independent elements over $K$, then
$$\F_q(mW)=K(x_{kj}\mid n+1\leq k\leq m,1\leq j\leq n)=K(\ell_{kj}/\ell_0\mid n+1\leq k\leq m,1\leq j\leq n)$$ is rational over $K$. Note that every $\frac{\ell_{kj}}{\ell_0}$ is $\GL(W)$-invariant.
Thus the statement holds.
\end{proof}

As  a direct consequence, we have

\begin{coro} 
If $n=\dim(W)=1$ and $m\geq 1$, then 
$$\F_{q}(mW)^{\GL(W)}=\F_{q}(x_{11},x_{21},\dots,x_{m1})^{\F_{q}^{\times}}=\F_{q}(x_{11}^{q-1},x_{i1}/x_{11}\mid i=2,\dots,m).$$
\end{coro}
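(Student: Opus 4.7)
The plan is to decompose the corollary into its two equalities and dispatch them separately. For the first equality, observe that when $n = \dim(W) = 1$, the general linear group $\GL(W)$ is canonically identified with the multiplicative group $\F_q^{\times}$, acting diagonally on $mW$ by scalar multiplication $\lambda \cdot x_{i1} = \lambda x_{i1}$ for each $i$. Hence $\F_q(mW)^{\GL(W)}$ is literally $\F_q(x_{11},x_{21},\dots,x_{m1})^{\F_q^{\times}}$, with no computation required.

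For the second equality, denote $L := \F_q(x_{11}^{q-1}, x_{i1}/x_{11} \mid i=2,\dots,m)$ and $M := \F_q(x_{11},\dots,x_{m1})^{\F_q^{\times}}$. The inclusion $L \subseteq M$ is immediate from the fundamental identity $\lam^{q-1} = 1$ for $\lam \in \F_q^{\times}$, which makes $x_{11}^{q-1}$ fixed, and from the fact that in each ratio $x_{i1}/x_{11}$ the common factor $\lam$ cancels. The nontrivial direction $M \subseteq L$ I would establish by a clean degree comparison using Galois theory.

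More precisely, since the scalar action of $\F_q^{\times}$ on $\F_q(x_{11},\dots,x_{m1})$ is visibly faithful (a scalar $\lam \neq 1$ changes $x_{11}$), Artin's theorem yields
\[
[\F_q(x_{11},\dots,x_{m1}) : M] = |\F_q^{\times}| = q-1.
\]
On the other hand, the relations $x_{i1} = (x_{i1}/x_{11}) \cdot x_{11}$ for $i \geq 2$ show that $\F_q(x_{11},\dots,x_{m1}) = L(x_{11})$, and $x_{11}$ is a root of the polynomial $T^{q-1} - x_{11}^{q-1} \in L[T]$, so
\[
[\F_q(x_{11},\dots,x_{m1}) : L] \leq q-1.
\]
Combining these two inequalities with the tower $L \subseteq M \subseteq \F_q(x_{11},\dots,x_{m1})$ forces $[M : L] = 1$, hence $M = L$.

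There is no real obstacle here; the only point to be careful about is verifying that the action is faithful (which uses $n=1$ to rule out nontrivial kernel) and checking that the minimal polynomial argument gives the correct degree bound. The argument is a transparent template for the Galois-theoretic technique that drives the main theorems of the paper, which is presumably why the authors include it explicitly as a sanity check of the $n=1$ case of Theorem \ref{Steinberg1}.
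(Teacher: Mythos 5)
Your proof is correct, but it takes a genuinely different route from the paper. The paper offers no standalone argument for this corollary: it is derived as an immediate specialization of Lemma \ref{lem2.4} to $n=1$, where $K=\F_q(x_{11})$, $\ell_0=x_{11}$ and $\ell_{k1}=x_{k1}$ for $k\ge 2$, giving $\F_q(mW)^{\GL(W)}=K^{\F_q^{\times}}(x_{k1}/x_{11}\mid 2\le k\le m)$, after which one quotes the $n=1$ case of Dickson's theorem, $K^{\F_q^{\times}}=\F_q(x_{11}^{q-1})$. The substance of Lemma \ref{lem2.4} is the Cramer's-rule observation that the invariant ratios $x_{k1}/x_{11}$ are algebraically independent over $K$ and generate $\F_q(mW)$ over $K$, so that taking $\GL(W)$-invariants can be done ``coefficientwise'' over $K^{\GL(W)}$. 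You bypass this entirely and run a self-contained degree count: faithfulness of the scalar action plus Artin's theorem gives $[\F_q(x_{11},\dots,x_{m1}):M]=q-1$, the polynomial $T^{q-1}-x_{11}^{q-1}\in L[T]$ gives $[\F_q(x_{11},\dots,x_{m1}):L]\le q-1$, and the tower $L\subseteq M$ forces equality. Both arguments are sound; yours is the more elementary and self-contained, and is in fact the $n=1$ shadow of the Galois-theoretic degree comparisons the paper deploys later (e.g.\ in Corollary \ref{coro3.4}), whereas the paper's derivation exhibits the corollary as a literal special case of the machinery built for general $n$, which is why it is labelled a ``direct consequence.''
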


The rest of this section is devoted to proving Theorem \ref{Steinberg1} for the case $m\geq n$.
Throughout we may suppose $m\geq n\geq 2.$

\begin{lem}\label{lem2.6}
For $1\leq i\leq n$, we let $\{c_{ns}^{(i)}=d_{ns}^{(i)}/d_{nn}^{(i)}\mid 0\leq s\leq n-1\}$ denote the Dickson invariants for $\GL(W)$ acting on the polynomial ring $\F_q[x_{i1},x_{i2},\dots,x_{in}]$. Then
$$c_{ns}^{(i)}\in \F_q(\ell_{ij}/\ell_0\mid 1\leqslant i, j\leqslant n)$$
for all $0\leq s\leq n-1$.
\end{lem}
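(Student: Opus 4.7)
The plan is to exploit Cramer's rule to expand each Frobenius power $X_i^{q^k}$ as an $E$-linear combination of $X_1,\ldots,X_n$, where $E:=\F_q(\ell_{ab}/\ell_0\mid 1\le a,b\le n)$, and then feed these expansions into the determinantal formula for $d_{ns}^{(i)}$.

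First I would apply Cramer's rule to the linear system $L_0 Y=X_i^q$: its unique solution has $j$-th coordinate $\det(L_{ij}^{(1)})/\det(L_0)=\ell_{ij}/\ell_0$, giving the key identity
$$X_i^q=\sum_{j=1}^{n}\frac{\ell_{ij}}{\ell_0}\,X_j\qquad(1\le i\le n).$$
Because $E$ contains $\F_q$ and is closed under the Frobenius $a\mapsto a^q$, a straightforward induction on $k\ge 0$ then produces coefficients $\alpha^{(k)}_{ij}\in E$ such that
$$X_i^{q^k}=\sum_{j=1}^{n}\alpha^{(k)}_{ij}\,X_j\qquad(1\le i\le n).$$
Indeed, raising the inductive hypothesis to the $q$-th power in characteristic $p$ turns every $X_j^{q^{k-1}}$ into $X_j^{q^k}$ while distributing cleanly over the sum, so substituting the base case $k=1$ closes the induction.

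Next, I would recall that, by definition, $d_{ns}^{(i)}$ is the determinant of the $n\times n$ matrix whose columns are $X_i,X_i^q,\ldots,\widehat{X_i^{q^s}},\ldots,X_i^{q^n}$. Substituting the expansion above into each column and invoking multilinearity, every nonvanishing term of the resulting sum has the form $\mathrm{sgn}(\sigma)\,\det(X_1,\ldots,X_n)\cdot(\text{product of $\alpha$'s in }E)$, since $\det(X_{j_1},\ldots,X_{j_n})$ vanishes whenever the indices are not a permutation of $(1,\ldots,n)$. Consequently $d_{ns}^{(i)}=\ell_0\cdot g_{ns}^{(i)}$ for some $g_{ns}^{(i)}\in E$, and the same argument gives $d_{nn}^{(i)}=\ell_0\cdot g_{nn}^{(i)}$ with $g_{nn}^{(i)}\ne 0$ (since $d_{nn}^{(i)}\ne 0$ by Dickson's theorem and $\ell_0\ne 0$). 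Dividing,
$$c_{ns}^{(i)}=\frac{d_{ns}^{(i)}}{d_{nn}^{(i)}}=\frac{g_{ns}^{(i)}}{g_{nn}^{(i)}}\in E,$$
as required.

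The only real obstacle is bookkeeping: one must track that the inductive coefficients $\alpha^{(k)}_{ij}$ remain in $E$ after repeated Frobenius, and check that the multilinear expansion of $d_{ns}^{(i)}$ cleanly factors out exactly one copy of $\ell_0$. Encoding the induction via the matrix identity $A^{(k)}=A^{q^{k-1}}A^{q^{k-2}}\cdots A^q A$, where $A:=(\ell_{ij}/\ell_0)_{i,j}$ and $A^{q^r}$ denotes the entrywise $q^r$-th power, would keep both points transparent without having to write out any fresh calculation.
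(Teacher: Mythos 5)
Your proposal is correct and follows essentially the same route as the paper: Cramer's rule applied to $L_0Y=X_i^{q^k}$, the product/induction identity expressing $\ell_{ij}^{(k)}/\ell_0$ in terms of the $\ell_{ij}/\ell_0$ (your $A^{(k)}=A^{q^{k-1}}\cdots A^qA$ is the paper's equation (2.4) up to a transpose convention), and the observation that the Dickson determinant $d_{ns}^{(i)}$ factors as $\ell_0$ times a determinant with entries in $E$ (your multilinear expansion is exactly the paper's identity (2.1), read as $\det(L_0M)=\det(L_0)\det(M)$). No gaps; the bookkeeping you flag is handled precisely by the matrix identity you state.
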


\begin{proof}
Note that $\ell_{0}=\det(L_{0})\neq0$. For each $i\in\{1,2,\dots,n\}$ and $k\in\N$,
it follows from the Cramer's rule that the non-homogenous linear equations $L_{0}\cdot Y=X_{i}^{q^k}$
has a unique common solution $Y_{i}^{(k)}=~^{t}(\ell_{i1}^{(k)}/\ell_{0},\ell_{i2}^{(k)}/\ell_{0},\dots,\ell_{in}^{(k)}/\ell_{0})$. Thus
$$
L_0\cdot\left(
\begin{array}{cccccc}
\ell_{i1}^{(0)}/\ell_0&\cdots&\ell_{i1}^{(s-1)}/\ell_0&\ell_{i1}^{(s+1)}/\ell_0&\cdots&\ell_{i1}^{(n)}/\ell_0\\
\ell_{i2}^{(0)}/\ell_0&\cdots&\ell_{i2}^{(s-1)}/\ell_0&\ell_{i2}^{(s+1)}/\ell_0&\cdots&\ell_{i2}^{(n)}/\ell_0\\
\vdots&\cdots&\vdots&\vdots&\cdots&\vdots\\
\ell_{in}^{(0)}/\ell_0&\cdots&\ell_{in}^{(s-1)}/\ell_0&\ell_{in}^{(s+1)}/\ell_0&\cdots&\ell_{in}^{(n)}/\ell_0
\end{array}\right)=(X_i,X_i^q,\dots, X_i^{q^{s-1}},X_i^{q^{s+1}},\dots, X_i^{q^n}).
$$
Taking determinant we see that
\begin{equation}
\label{eq2.1}
\ell_0\cdot\det\left(
\begin{array}{cccccc}
\ell_{i1}^{(0)}/\ell_0&\cdots&\ell_{i1}^{(s-1)}/\ell_0&\ell_{i1}^{(s+1)}/\ell_0&\cdots&\ell_{i1}^{(n)}/\ell_0\\
\ell_{i2}^{(0)}/\ell_0&\cdots&\ell_{i2}^{(s-1)}/\ell_0&\ell_{i2}^{(s+1)}/\ell_0&\cdots&\ell_{i2}^{(n)}/\ell_0\\
\vdots&\cdots&\vdots&\vdots&\cdots&\vdots\\
\ell_{in}^{(0)}/\ell_0&\cdots&\ell_{in}^{(s-1)}/\ell_0&\ell_{in}^{(s+1)}/\ell_0&\cdots&\ell_{in}^{(n)}/\ell_0
\end{array}\right)=d_{ns}^{(i)}.
\end{equation}
Hence, $c_{ns}^{(i)}=\frac{d_{ns}^{(i)}/\ell_0}{d_{nn}^{(i)}/\ell_0}\in \F_q(\ell_{ij}^{(k)}/\ell_0\mid 1\leq i,j\leq n,0\leq k\leq n)$ for all $s\in\{0,1,\dots,n-1\}$. It remains to show that
every $\ell_{ij}^{(k)}/\ell_0\in \F_q(\ell_{ij}/\ell_0\mid 1\leqslant i,j\leqslant n).$
To do this, we consider the following equality
\begin{equation}\label{eq2.2}
L_0\cdot\left(
\begin{array}{cccccc}
\ell_{11}^{(k)}/\ell_0&\ell_{21}^{(k)}/\ell_0&\cdots&\ell_{n1}^{(k)}/\ell_0\\
\ell_{12}^{(k)}/\ell_0&\ell_{22}^{(k)}/\ell_0&\cdots&\ell_{n2}^{(k)}/\ell_0\\
\vdots&\vdots&\cdots&\vdots\\
\ell_{1n}^{(k)}/\ell_0&\ell_{2n}^{(k)}/\ell_0&\cdots&\ell_{nn}^{(k)}/\ell_0
\end{array}\right)=(X_1^{q^k},X_2^{q^k},\dots, X_n^{q^k})=:L_0^{(q^k)}.
\end{equation}
Substituting $x_{ij}^{q^t}$ for $x_{ij}$ in this equality and assuming $k=1$, we obtain
\begin{equation}\label{eq2.3}
L_0^{(q^t)}\cdot\left((\ell_{ij}/\ell_0)^{q^t}\right)_{n\times n}
=L_0^{(q^{t+1})}.
\end{equation}
Combining (\ref{eq2.2}) and (\ref{eq2.3}) we see that
\begin{eqnarray}
\left(\ell_{ij}^{(k)}/\ell_0\right)_{n\times n}
&=&L_0^{-1}\cdot L_0^{(q^k)}\nonumber\\
&=&(L_0^{-1}\cdot L_0^{(q)})((L_0^{(q)})^{-1}\cdot L_0^{(q^2)})\cdots((L_0^{(q^{k-1})})^{-1}\cdot L_0^{(q^k)})\nonumber\\
&=&\left(\ell_{ij}/\ell_0\right)_{n\times n}\cdot\left((\ell_{ij}/\ell_0)^q\right)_{n\times n}\cdots\left((\ell_{ij}/\ell_0)^{q^{k-1}}\right)_{n\times n}.\label{eq2.4}
\end{eqnarray}
Hence, every $\ell_{ij}^{(k)}/\ell_0$ can be expressed polynomially by elements in
$\{\ell_{ij}/\ell_0\mid 1\leqslant i,j\leqslant n\}$.
This completes the proof.
\end{proof}

\begin{lem} \label{lem2.7}
Let $\Sigma_{n}$ denote the symmetric group of degree $n$. Then
\begin{equation}
\label{ }
\ell_{0}^{q-1+n}=\sum_{\sigma={1~2~\dots~n\choose k_{1}k_{2}\cdots k_{n}} \in \Sigma_{n}} (-1)^{\sign(\sigma)}\ell_{1k_{1}}\ell_{2k_{2}}\cdots\ell_{nk_{n}}.
\end{equation}
\end{lem}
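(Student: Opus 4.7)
The plan is to recognize the right-hand side as a determinant and then exploit the matrix identity (\ref{eq2.2}) that was already established in the proof of Lemma \ref{lem2.6}.

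First I would observe that the sum
$$\sum_{\sigma \in \Sigma_n} (-1)^{\mathrm{sign}(\sigma)}\, \ell_{1k_1}\ell_{2k_2}\cdots \ell_{nk_n}$$
is, by definition of the determinant, exactly $\det\bigl((\ell_{ij})_{n\times n}\bigr)$, where the $(i,j)$-entry is $\ell_{ij} = \ell_{ij}^{(1)}$. So the stated identity is equivalent to
$$\det\bigl((\ell_{ij})_{n\times n}\bigr) = \ell_0^{\,q+n-1}.$$

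Next I would invoke the matrix equation (\ref{eq2.2}) in the case $k = 1$, namely
$$L_0 \cdot (\ell_{ij}/\ell_0)_{n\times n} = L_0^{(q)} = (X_1^q, X_2^q, \ldots, X_n^q).$$
Taking determinants and pulling the scalar $1/\ell_0$ out of each of the $n$ columns on the left gives
$$\ell_0 \cdot \frac{1}{\ell_0^{\,n}}\, \det\bigl((\ell_{ij})_{n\times n}\bigr) = \det\bigl(L_0^{(q)}\bigr).$$
The right-hand determinant is computed by the Frobenius: since each entry of $L_0^{(q)}$ is the $q$-th power of the corresponding entry of $L_0$, and raising to the $q$-th power is a ring homomorphism, $\det\bigl(L_0^{(q)}\bigr) = \bigl(\det L_0\bigr)^q = \ell_0^{\,q}$.

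Combining these identities yields $\ell_0^{\,1-n} \det\bigl((\ell_{ij})_{n\times n}\bigr) = \ell_0^{\,q}$, i.e.\ $\det\bigl((\ell_{ij})_{n\times n}\bigr) = \ell_0^{\,q+n-1}$, which is the desired formula. There is no real obstacle here; the only subtle point is the Frobenius computation of $\det(L_0^{(q)})$, and the whole argument is essentially a determinantal corollary of equation (\ref{eq2.2}) established inside Lemma \ref{lem2.6}.
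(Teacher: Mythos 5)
Your proof is correct and follows essentially the same route as the paper: both identify the right-hand side as $\det\bigl((\ell_{ij})_{n\times n}\bigr)$, use the Cramer's-rule identity $L_0\cdot(\ell_{ij}/\ell_0)=(X_1^q,\dots,X_n^q)$ (equation (\ref{eq2.2}) with $k=1$), and take determinants together with the Frobenius computation $\det(L_0^{(q)})=\ell_0^q$. No gaps.
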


\begin{proof}
For each $i\in\{1,2,\dots,n\}$, the Cramer's rule implies that $L_{0}\cdot ~^{t}(\frac{\ell_{i1}}{\ell_{0}},\frac{\ell_{i2}}{\ell_{0}},\dots,\frac{\ell_{in}}{\ell_{0}})=X_{i}^{q}$.
Then
\begin{equation}
\label{ }
L_0\cdot\begin{pmatrix}
   \ell_{11}/\ell_{0}   &   \ell_{21}/\ell_{0} &\cdots& \ell_{n1}/\ell_{0}   \\
   \ell_{12}/\ell_{0}   &  \ell_{22}/\ell_{0} &\cdots& \ell_{n2}/\ell_{0}   \\
   \vdots&\vdots&\vdots&\vdots\\
    \ell_{1n}/\ell_{0}   &   \ell_{2n}/\ell_{0} &\cdots& \ell_{nn}/\ell_{0}   \\
\end{pmatrix}=(X_{1}^{q},X_{2}^{q},\dots,X_{n}^{q}).
\end{equation}
Taking determinants for the above matrices we see that $\ell_{0}\cdot \det(\ell_{ij}/\ell_{0})_{n\times n}=\ell_{0}^{q}.$
Hence $$\ell_{0}^{q-1+n}=\det(\ell_{ij})_{n\times n}=\sum_{\sigma={1~2~\dots~n\choose k_{1}k_{2}\cdots k_{n}} \in \Sigma_{n}} (-1)^{\sign(\sigma)}\ell_{1k_{1}}\ell_{2k_{2}}\cdots\ell_{nk_{n}},$$
as desired.
\end{proof}

We are ready to prove Theorem \ref{Steinberg1} for the case $m\geq n$.

\begin{proof}[Proof of Theorem \ref{Steinberg1} for the case $m\geqslant n$] Note that Lemma \ref{lem2.4} reduces the proof to the case $m=n$.
Let $\widetilde{G}$ be the direct product of $n$ copies of $\GL(W)$. By Kemper  \cite[Proposition 16]{Kem1996}, we see that $\F_q[nW]^{\widetilde{G}}$ is a polynomial algebra over $\F_{q}$, generated by $\{c_{nj}^{(i)}\mid 1\leq i\leq n,0\leqslant j\leqslant n-1\}$. 
Thus $\F_q(nW)^{\widetilde{G}}=\F_q(c_{n0}^{(i)},c_{n1}^{(i)},\dots,c_{n,n-1}^{(i)}\mid 1\leq i\leq n)$. Let
$E$ be the subfield of $\F_q(nW)$ generated by $\{\ell_{ij}/\ell_0\mid 1\leq i,j\leq n\}$ over $\F_q$. By Lemma \ref{lem2.6} we see that $\F_q(nW)^{\widetilde{G}}$ is contained in $E$. Let $H$ be the subgroup of $\widetilde{G}$ consisting of invertible matrices that fix every element in $E$.
Artin's theorem implies that $\F_q(nW)$ is Galois over $\F_q(nW)^{\widetilde{G}}$ with the Galois group $\widetilde{G}$. Thus $\F_q(nW)$ is also Galois over $E$ with the Galois group $H$. Now we have the following situation:
$$
\xymatrix{
\F_q(nW)^{\widetilde{G}} ~\ar@{--}[d] \ar@{^{(}->}[r] &~E~ \ar@{--}[d] \ar@{^{(}->}[r]& ~\F_q(nW)^{\GL(W)}~ \ar@{--}[d] \ar@{^{(}->}[r]&  ~\F_q(nW) \ar@{--}[d]\\
\widetilde{G}~& ~H~\ar@{_{(}->}[l]  &~\GL(W)~ \ar@{_{(}->}[l] & ~1 \ar@{_{(}->}[l]
}
$$
By Galois theory we see that $\F_q(nW)^{\GL(W)}=E$ if and only if $H=\GL(W)$. Thus it remains to show that
$H\subseteq \GL(W)$. For any $\sigma=\diag\{\sigma_1,\sigma_2,\dots,\sigma_n\}\in H$ where each $\sigma_j\in \GL(W)$, it is sufficient to show that $\sigma_1=\sigma_2=\cdots=\sigma_n$.
Let $\tau=\diag\{\sigma_1,\sigma_1,\dots,\sigma_1\}$.
As in the proof of Lemma \ref{lem2.6} we see that $\eta_{s}:=\det(X_{1},X_{s},X_{1}^{q},\dots,X_{1}^{q^{n-2}})=\ell_{0}\cdot f_{s}$
for each $s\in\{2,3,\dots,n\}$, where $f_{s}$ denotes a polynomial in elements of $\{\ell_{ij}^{(k)}/\ell_{0}\mid 1\leqslant i,j,k\leqslant n\}$. The fact that every $\ell_{ij}^{(k)}/\ell_{0}$ belongs to the field $\F_{q}(\ell_{ij}/\ell_{0}\mid 1\leqslant i,j\leqslant n)$, together with Lemma \ref{lem2.7}, implies that $\eta_{s}^{q-1}\in E.$
Thus
$\eta_s^{q-1}=\sigma\cdot(\tau^{-1}\cdot\eta_s^{q-1})=(\sigma\tau^{-1})\cdot\eta_s^{q-1}=
\diag\{1,\sigma_2\sigma_1^{-1},\dots,
\sigma_n\sigma_1^{-1}\}\cdot\eta_s^{q-1}$, i.e.,
$$\det(X_{1},X_{s},X_{1}^{q},\dots,X_{1}^{q^{n-2}})^{q-1}=\det(X_{1},(\sigma_s\sigma_1^{-1})\cdot X_{s},X_{1}^{q},\dots,X_{1}^{q^{n-2}})^{q-1}.$$
Hence there exists an element $a\in\F_{q}^{\times}$ such that
\begin{equation}
\label{eq2.7}
\det(X_{1},X_{s},X_{1}^{q},\dots,X_{1}^{q^{n-2}})=a\cdot\det(X_{1},(\sigma_s\sigma_1^{-1})\cdot X_{s},X_{1}^{q},\dots,X_{1}^{q^{n-2}})
\end{equation}
Taking $x_{s1}=x_{s2}=\dots=x_{sn}=1$ in (\ref{eq2.7}) we see that $a=1$. Thus it follows from (\ref{eq2.7}) that
\begin{equation}
\label{eq2.8}
\det(X_{1},X_{s}-(\sigma_s\sigma_1^{-1})\cdot X_{s},X_{1}^{q},\dots,X_{1}^{q^{n-2}})=0.
\end{equation}
Consider the $n\times (n-1)$-matrix $A:=(X_{1},X_{1}^{q},\dots,X_{1}^{q^{n-2}})$ and let $A_{k}$ denote the $(n-1)\times (n-1)$ matrix obtained from $A$ by deleting the $k$-th row  where $k=1,2,\dots,n$. By the Jacobian criterion (see Benson  \cite[Proposition 5.4.2]{Ben1993}) we see that $\det(A_{1}),\det(A_{2}),\dots,\det(A_{n})$ are algebraically independent over $\F_{q}(x_{s1},x_{s2},\dots,x_{sn})$. Hence, the Laplace expansion along the second column in the determinant of the left hand side of (\ref{eq2.8}), implies that $X_{s}-(\sigma_s\sigma_1^{-1})\cdot X_{s}=0$, i.e., $\sigma_s\cdot\sigma_1^{-1}=I_{n}$, the identity map, for all $s\in\{2,3,\dots,n\}$. Therefore,
$\sigma_1=\sigma_2=\cdots=\sigma_n$ and $\sigma\in \GL(W)$, as required.
\end{proof}

\section{Localized Polynomial Rings} \label{Section3}
\setcounter{equation}{0}
\renewcommand{\theequation}
{3.\arabic{equation}}
\setcounter{theorem}{0}
\renewcommand{\thetheorem}
{3.\arabic{theorem}}

In this section we first use Theorem \ref{Steinberg1} in the special case $m=n$ to give a proof of  Theorem \ref{Steinberg1} for the case $m<n$; and then as an application, we give a proof of Theorem \ref{thm1.2} which consists of Theorem \ref{thm3.3} and Corollary \ref{coro3.4}.
To do this, we need to detect whether the vector invariant ring
$\F_{q}[nW]^{\GL(W)}$ is a localized polynomial ring. The following general criterion will be useful.

\begin{prop}\label{prop3.1}
Let $V$ be a faithful $k$-dimensional representation of a finite group $H$ over a field $\F$ and $G\subseteq H$ be a subgroup. 
Suppose there exist $f_{1},f_{2},\dots,f_{k}\in\F(V)^{G}$ such that $\F(V)^{G}=\F(f_{1},f_{2},\dots,f_{k})$ and there exists a homogenous polynomial $f\in \F[f_{1},f_{2},\dots,f_{k}]\cap \F[V]^{G}$ such that
$\F[f_{1},f_{2},\dots,f_{k}][f^{-1}]\subseteq \F[V]^{G}[f^{-1}]$. 
\begin{enumerate}
  \item If $\F[V]^{G}[f^{-1}]$ is integral over $\F[f_{1},f_{2},\dots,f_{k}][f^{-1}]$,
then $$\F[V]^{G}[f^{-1}]=\F[f_{1},f_{2},\dots,f_{k}][f^{-1}].$$
  \item If  $\F[V]^{H}\subseteq \F[f_{1},f_{2},\dots,f_{k}][f^{-1}]$,
  then $\F[V]^{G}[f^{-1}]=\F[f_{1},f_{2},\dots,f_{k}][f^{-1}].$
\end{enumerate}
\end{prop}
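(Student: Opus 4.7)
The plan is to prove part (1) directly and then reduce part (2) to part (1).

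For part (1), I would first observe that since $V$ is faithful and $H$ (hence $G$) is finite, the extension $\F(V)^{G}\subseteq\F(V)$ is algebraic, so $\F(V)^{G}$ has transcendence degree $k=\dim V$ over $\F$. Because $\F(V)^{G}=\F(f_{1},f_{2},\dots,f_{k})$, the generators $f_{1},\dots,f_{k}$ must then be algebraically independent over $\F$. Consequently $\F[f_{1},\dots,f_{k}]$ is a genuine polynomial algebra, and its localization $A:=\F[f_{1},\dots,f_{k}][f^{-1}]$ is a UFD; in particular $A$ is integrally closed in its fraction field, which equals $\F(V)^{G}$. The ring $B:=\F[V]^{G}[f^{-1}]$ has the same fraction field $\F(V)^{G}$, contains $A$ by hypothesis, and is integral over $A$ by hypothesis. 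Any element of $B$ is then simultaneously integral over the normal domain $A$ and lies in the fraction field of $A$, so it must lie in $A$. This forces $A=B$.

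For part (2), the goal is to verify the integrality hypothesis of (1). Since $H$ is finite, $\F[V]$ is a finitely generated, and hence integral, $\F[V]^{H}$-module by the classical Hilbert--Noether finiteness. The assumption $\F[V]^{H}\subseteq A$ then makes every element of $\F[V]$ integral over $A$; in particular every element of $\F[V]^{G}\subseteq \F[V]$ is integral over $A$. After localizing at $f$ we conclude that $B$ is integral over $A$, so part (1) applies and yields $A=B$.

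The only non-routine point I expect is the observation that the field-theoretic equality $\F(V)^{G}=\F(f_{1},\dots,f_{k})$, combined with the transcendence-degree count, forces $f_{1},\dots,f_{k}$ to be algebraically independent and therefore $A$ to be normal. Once this is noted, both parts collapse to the standard principle that a normal domain coincides with every integral overring contained in its field of fractions.
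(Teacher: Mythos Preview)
Your proposal is correct and follows essentially the same route as the paper: for (1) you both argue that algebraic independence of $f_1,\dots,f_k$ makes $A=\F[f_1,\dots,f_k][f^{-1}]$ factorial hence integrally closed, and then use that $B\subseteq\operatorname{Frac}(A)$ is integral over $A$ to conclude $A=B$; for (2) you both use finiteness of $H$ to get $\F[V]$ integral over $\F[V]^H\subseteq A$ and then invoke (1). Your transcendence-degree justification for algebraic independence is slightly more explicit than the paper's, but the argument is otherwise identical.
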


\begin{proof}
(1) 
Since the invariant field $\F(V)^{G}$ is purely transcendental over $\F$, $\{f_{1},f_{2},\dots,f_{k}\}$ is algebraically independent over $\F$. Thus $\F[f_{1},f_{2},\dots,f_{k}]$ is a polynomial subalgebra of $\F(V)^{G}$. From this fact we see that $\F[f_{1},f_{2},\dots,f_{k}][f^{-1}]$ is factorial and so is integrally closed. Note that
the field of fractions of  $\F[f_{1},f_{2},\dots,f_{k}][f^{-1}]$ is $\F(V)^{G}$ which contains $\F[V]^{G}[f^{-1}]$.
As $\F[V]^{G}[f^{-1}]\supseteq\F[f_{1},f_{2},\dots,f_{k}][f^{-1}]$ is integral, we have $\F[V]^{G}[f^{-1}]=\F[f_{1},f_{2},\dots,f_{k}][f^{-1}]$.

(2) We may regard $\F[V]^{H}$ as an $\F$-subalgebra of $\F[V]^{G}$. As $H$ is a finite group, $\F[V]$ is integral over $\F[V]^{H}$, and so is $\F[V]^{G}$. Thus $\F[V]^{G}[f^{-1}]$ is integral over 
$\F[V]^{H}[f^{-1}]$. Since $\F[f_{1},f_{2},\dots,f_{k}][f^{-1}]$ contains $\F[V]^{H}$, it also contains
$\F[V]^{H}[f^{-1}]$. Hence, $\F[V]^{G}[f^{-1}]$ is integral over $\F[f_{1},f_{2},\dots,f_{k}][f^{-1}]$. Now the first statement applies. 
\end{proof}

\begin{prop}\label{prop3.2}
 Let $\dim(W)=n\geq 2$ and let $r$ be the minimal positive integer such that $r(q-1)-n\geq 0$. Define $\ell:=\ell_{0}^{r(q-1)-n}\prod_{i=1}^{n}d_{nn}^{(i)}$, where $\ell_{0}=\det(X_{1},X_{2},\dots,X_{n})$. Then
$$\F_{q}[nW]^{{\rm GL}(W)}[\ell^{-1}]=\F_{q}[\ell_{ij}/\ell_{0}\mid 1\leq i,j\leq n][\ell^{-1}].$$
\end{prop}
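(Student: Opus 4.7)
The plan is to apply Proposition \ref{prop3.1}(2), taking $V = nW$ with ambient group $H = \wt{G} := \GL(W)^{n}$ acting coordinatewise, subgroup $G = \GL(W)$ acting diagonally, the $n^{2}$ generators $f_{ij} := \ell_{ij}/\ell_{0}$ for $1 \leq i,j \leq n$, and denominator $f := \ell$. Four items must be checked: (a) $\F_{q}(nW)^{\GL(W)} = \F_{q}(\ell_{ij}/\ell_{0})$; (b) $\ell \in \F_{q}[\ell_{ij}/\ell_{0}] \cap \F_{q}[nW]^{\GL(W)}$; (c) $\F_{q}[\ell_{ij}/\ell_{0}][\ell^{-1}] \subseteq \F_{q}[nW]^{\GL(W)}[\ell^{-1}]$; and (d) $\F_{q}[nW]^{\wt{G}} \subseteq \F_{q}[\ell_{ij}/\ell_{0}][\ell^{-1}]$.

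Item (a) is Theorem \ref{Steinberg1} for $m = n$, already proved in Section \ref{Section2}. For (b), the $\GL(W)$-invariance of $\ell$ is a weight computation: $\ell_{0}$ and each $d_{nn}^{(i)}$ are $\det$-semi-invariants, so $\ell$ has weight $\det^{(r(q-1)-n)+n} = \det^{r(q-1)} = 1$. For $\ell \in \F_{q}[\ell_{ij}/\ell_{0}]$, I rewrite
$$\ell \;=\; \ell_{0}^{r(q-1)}\cdot \prod_{i=1}^{n}\bigl(d_{nn}^{(i)}/\ell_{0}\bigr),$$
and invoke Lemma \ref{lem2.7} (which gives $\ell_{0}^{q-1} = \det(\ell_{ij}/\ell_{0})_{n\times n} \in \F_{q}[\ell_{ij}/\ell_{0}]$) together with the matrix identity inside the proof of Lemma \ref{lem2.6} (which exhibits each $d_{nn}^{(i)}/\ell_{0}$ as the determinant of an explicit matrix whose entries are polynomials in the $\ell_{ij}/\ell_{0}$'s).

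For (c), each generator is written as
$$\frac{\ell_{ij}}{\ell_{0}} \;=\; \frac{\ell_{ij}\,\ell_{0}^{r(q-1)-n-1}\prod_{k=1}^{n} d_{nn}^{(k)}}{\ell},$$
whose numerator is $\GL(W)$-invariant by another weight count (total weight $\det^{r(q-1)}=1$) and lies in $\F_{q}[nW]$ once $r(q-1)-n-1\geq 0$. For (d) I use Kemper \cite[Proposition 16]{Kem1996}, which exhibits $\F_{q}[nW]^{\wt{G}}$ as the polynomial algebra generated by the Dickson invariants $c_{nj}^{(i)} = (d_{nj}^{(i)}/\ell_{0})/(d_{nn}^{(i)}/\ell_{0})$: Lemma \ref{lem2.6} places both numerator and denominator of each $c_{nj}^{(i)}$ in $\F_{q}[\ell_{ij}/\ell_{0}]$, and the factorization $\ell = (\ell_{0}^{q-1})^{r}\prod_{i}(d_{nn}^{(i)}/\ell_{0})$ inside the integral domain $\F_{q}[\ell_{ij}/\ell_{0}]$ forces each factor $d_{nn}^{(i)}/\ell_{0}$ to become a unit once $\ell$ is inverted; hence every $c_{nj}^{(i)}$ lies in $\F_{q}[\ell_{ij}/\ell_{0}][\ell^{-1}]$.

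With these four items established, Proposition \ref{prop3.1}(2) yields the equality $\F_{q}[nW]^{\GL(W)}[\ell^{-1}] = \F_{q}[\ell_{ij}/\ell_{0}][\ell^{-1}]$. The main delicate point is the simultaneous polynomiality-and-invariance bookkeeping in step (c); the exponent $r(q-1)-n$ in the definition of $\ell$ is engineered so that $\ell \cdot (\ell_{ij}/\ell_{0})$ becomes an invariant polynomial, while Lemma \ref{lem2.7} is what makes the reverse passage from $\ell_{ij}/\ell_{0}$-polynomials back into $\F_{q}[nW]^{\GL(W)}$ clean.
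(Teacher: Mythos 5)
Your proof is, in structure and in every substantive step, the paper's own: the same appeal to Proposition \ref{prop3.1}(2) with $H=\widetilde{G}=\GL(W)^{n}$ and $G=\GL(W)$ embedded diagonally, the same use of Lemma \ref{lem2.7} and of (\ref{eq2.1}), (\ref{eq2.4}) to place $\ell_{0}^{q-1}$ and the $d_{ns}^{(i)}/\ell_{0}$ in $B:=\F_{q}[\ell_{ij}/\ell_{0}\mid 1\leq i,j\leq n]$, and the same presentation of $\ell_{ij}/\ell_{0}$ with denominator $\ell$. Your step (d) is phrased slightly more cleanly than the paper's (you observe that inverting $\ell=(\ell_{0}^{q-1})^{r}\prod_{i}(d_{nn}^{(i)}/\ell_{0})$ makes each factor $d_{nn}^{(i)}/\ell_{0}$ a unit of $B[\ell^{-1}]$, where the paper instead exhibits the invariant numerator $\ell_{0}^{r(q-1)-n}d_{ns}^{(i)}\prod_{j\neq i}d_{nn}^{(j)}$ explicitly), but that difference is cosmetic.

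The clause you leave hanging in step (c), ``lies in $\F_{q}[V]$ once $r(q-1)-n-1\geq 0$'', is a genuine gap --- and it is present, silently, in the paper's proof as well. With $r$ minimal such that $r(q-1)-n\geq 0$, one has $r(q-1)-n-1=-1$ exactly when $(q-1)\mid n$, e.g.\ for every $n$ when $q=2$. In that case $\ell=\prod_{i}d_{nn}^{(i)}$, and the failure is not one of bookkeeping: $\ell_{0}=\det(x_{ij})$ is irreducible, it divides no $\ell_{ij}$ (for $i\neq j$ the variables of $X_{j}$ do not even occur in $\ell_{ij}$, and for $i=j$ one checks that $\ell_{jj}$ does not vanish identically on the hypersurface $\ell_{0}=0$), and it divides no $d_{nn}^{(k)}$ (which involves only the variables of $X_{k}$). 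By unique factorization $\ell_{ij}\,\ell^{k}/\ell_{0}$ is then never a polynomial, so $\ell_{ij}/\ell_{0}\notin\F_{q}[nW]^{\GL(W)}[\ell^{-1}]$ and the asserted equality itself fails for this choice of $\ell$. The repair is to force $\ell_{0}$ to divide $\ell$: take $r$ minimal with $r(q-1)-n\geq 1$ (equivalently, replace $r$ by $r+1$ when $(q-1)\mid n$). With that adjustment every step of your argument --- and of the paper's --- goes through verbatim; you should make that hypothesis explicit rather than leaving the inequality as an unresolved conditional.
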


\begin{proof} Previously, we have proved that $\F_{q}(nW)^{\GL(W)}=\F_{q}(\ell_{ij}/\ell_{0}\mid 1\leq i,j\leq n)$. 
 Let $B$ denote the polynomial subalgebra of $\F_{q}(nW)^{\GL(W)}$, generated by $\{\ell_{ij}/\ell_{0}\mid 1\leq i,j\leq n\}$ over $\F_{q}$.
Let $H$ be the direct product of $n$ copies of $\GL(W)$ acting on $nW$ diagonally.
Then ${\rm GL}(W)$ can be viewed as a subgroup of $H$.

To see that $\ell\in B\cap \F_{q}[nW]^{\GL(W)}$,  we note that $\ell_{0}$ and every $d_{nn}^{(i)}$ are $\det$-invariants, thus
$\ell\in \F_{q}[nW]^{\GL(W)}$. Moreover, it follows from (\ref{eq2.1}) and (\ref{eq2.4}) that every $d_{nn}^{(i)}/\ell_{0}\in B$.
By Lemma \ref{lem2.7} we see that $\ell_{0}^{q-1}\in B$. Thus
\begin{equation}
\label{eq3.1}
\ell=\ell_{0}^{r(q-1)}\cdot \prod_{i=1}^{n}\frac{d_{nn}^{(i)}}{\ell_{0}}\in B.
\end{equation}
Hence, $\ell\in B\cap \F_{q}[nW]^{\GL(W)}$.

For all $1\leq i,j\leq n$, we have
$$\frac{\ell_{ij}}{\ell_{0}}=\frac{\ell_{ij}\cdot \ell_{0}^{r(q-1)-n-1}\prod_{i=1}^{n}d_{nn}^{(i)}}{\ell_{0}\cdot \ell_{0}^{r(q-1)-n-1}\prod_{i=1}^{n}d_{nn}^{(i)}}=\frac{\ell_{ij}\cdot \ell_{0}^{r(q-1)-n-1}\prod_{i=1}^{n}d_{nn}^{(i)}}{\ell}\in \F_{q}[nW]^{{\rm GL}(W)}[\ell^{-1}].$$
Thus $B[\ell^{-1}]\subseteq \F_{q}[nW]^{{\rm GL}(W)}[\ell^{-1}].$
By the second statement of Proposition \ref{prop3.1}, it suffices to show that 
\begin{equation}\tag{$\dag$}
\label{dag}
\F_{q}[nW]^{H}\subseteq
B[\ell^{-1}].
\end{equation}
Recall that $\F_{q}[nW]^{H}=\F_{q}[c_{ns}^{(i)}\mid 1\leq i\leq n,0\leq s\leq n-1]$ where $c_{ns}^{(i)}=d_{ns}^{(i)}/d_{nn}^{(i)}$
defined as in Lemma \ref{lem2.6}. 
By (\ref{eq2.1}) and (\ref{eq2.4}) we see that every $d_{ns}^{(i)}/\ell_{0}\in B$.
Thus 
\begin{equation}
\label{ }
\ell_{0}^{r(q-1)-n}\cdot d_{ns}^{(i)}\cdot \prod_{i\neq j=1}^{n}d_{nn}^{(j)}=\ell_{0}^{r(q-1)}\cdot \frac{d_{ns}^{(i)}}{\ell_{0}}\cdot \prod_{i\neq j=1}^{n}\frac{d_{nn}^{(j)}}{\ell_{0}}\in B.
\end{equation}
Therefore, for any $1\leq i\leq n,0\leq s\leq n-1$, we have
$$c_{ns}^{(i)}=\frac{d_{ns}^{(i)}}{d_{nn}^{(i)}}=\frac{\ell_{0}^{r(q-1)-n}\cdot d_{ns}^{(i)}\cdot \prod_{i\neq j=1}^{n}d_{nn}^{(j)}}{\ell_{0}^{r(q-1)-n}\cdot  \prod_{j=1}^{n}d_{nn}^{(j)}}=\frac{\ell_{0}^{r(q-1)-n}\cdot d_{ns}^{(i)}\cdot \prod_{i\neq j=1}^{n}d_{nn}^{(j)}}{\ell}\in B[\ell^{-1}].$$
This proves (\ref{dag}), and thus the proof is completed.
\end{proof}

Now we are ready to prove Theorem \ref{Steinberg1} for the case $m<n$.

\begin{proof}[Proof of Theorem \ref{Steinberg1} for the case $m<n$]
We assume that $m<n$ and consider the $\F_{q}$-algebra homomorphism 
$\pi:\F_{q}[X_1,X_2,\dots, X_n]\longrightarrow \F_{q}[X_1,X_2,\dots, X_m]$ defined by fixing $X_{1},\dots,X_{m-1}$,
carrying $X_{m}$ to $X_{m}^{q^{n-m}}$, and carrying $X_{m+k}$ to $X_{m}^{q^{k-1}}$ for $k\in\{1,2,\dots,n-m\}$.
Clearly, the map $\pi$ commutes with the action of $\GL(W)$ and it restricts to a surjective $\F_{q}$-algebra homomorphism 
from $\F_{q}[X_1,X_2,\dots, X_n]^{G}$ to $\F_{q}[X_1,X_2,\dots, X_m]^{G}$ for any subgroup $G\subseteq\GL(W)$.

For any $f\in \F_{q}[mW]^{\GL(W)}=\F_{q}[X_1,X_2,\dots, X_m]^{\GL(W)}$, there exists $f'\in \F_{q}[nW]^{\GL(W)}$
such that $f=\pi(f')$. As we have proved in Proposition \ref{prop3.2} that
\begin{equation}
\label{eq3.4}
\F_{q}[nW]^{{\rm GL}(W)}[\ell^{-1}]=\F_{q}[\ell_{ij}/\ell_{0}\mid 1\leq i,j\leq n][\ell^{-1}],
\end{equation}
where $\ell=\ell_{0}^{q-2}\prod_{i=1}^{n}d_{nn}^{(i)}$. Thus there exists a polynomial $P$ in $n^{2}$ variables and an integer $k$ such that $$f'=\frac{P(\ell_{11}/\ell_{0},\dots,\ell_{1n}/\ell_{0},\ell_{21}/\ell_{0},\dots,\ell_{nn}/\ell_{0})}{\ell^{k}}$$
which implies that there exist an integer $d$ and a polynomial $P'$ in $n^{2}+1$ variables (induced by $P$) such that 
$$\ell_{0}^{d}\cdot f'\cdot\ell^{k}=P'(\ell_{11},\dots,\ell_{1n},\ell_{21},\dots,\ell_{nn},\ell_{0}).$$
A direct computation shows that $\pi(\ell_{0})$ and $\pi(\ell)$ are not zero. Hence,
$$f=\pi(f')=\frac{P(\pi(\ell_{11})/\pi(\ell_{0}),\dots,\pi(\ell_{1n})/\pi(\ell_{0}),\pi(\ell_{21})/\pi(\ell_{0}),\dots,\pi(\ell_{nn})/\pi(\ell_{0}))}{\pi(\ell)^{k}}$$
which belongs to $M:=\F_{q}[\ell_{ij}/\ell_{0}\mid 1\leq i\leq m,1\leq j\leq n][\pi(\ell)^{-1}]$, where
$\ell_{ij}/\ell_{0}$ of $M$ is defined as in Theorem  \ref{Steinberg1}  for the case $m<n$ in Section 2. This means that
\begin{eqnarray*}
\F_{q}(mW)^{\GL(W)}&=&\textrm{Quot}(\F_{q}[mW]^{\GL(W)}[\pi(\ell)^{-1}]) \\
&=&\textrm{Quot}(M)\\
&=&\F_{q}(\ell_{ij}/\ell_{0}\mid 1\leq i\leq m,1\leq j\leq n)
\end{eqnarray*}
where $\textrm{Quot}(-)$ denotes the field of fractions of $-$, and the last equality follows from (\ref{eq3.1}).
Therefore, the proof of Theorem \ref{Steinberg1} is completed.
\end{proof}

We also provide an application of Theorem \ref{Steinberg1}.

\begin{thm}\label{thm3.3}
Let $\B=\{\ell_{0}^{q-2}\ell_{ij}\mid 1\leqslant i\leqslant m,1\leqslant j\leqslant n\}$,
$b\in \{\ell_{0}^{q-2}\ell_{ij}\mid 1\leqslant i\leqslant \min\{m,n\},1\leqslant j\leqslant n\}$ be any element and $\B'=\B\setminus \{b\}$. Then
$\F_{q}(mW)^{\GL(W)}$ is generated by $\{\ell_{0}^{q-1}\}\cup\B'$ over $\F_{q}$.
\end{thm}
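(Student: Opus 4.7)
The strategy is to combine Steinberg's Theorem with the determinantal identity of Lemma \ref{lem2.7} to eliminate the single missing generator. Because $\ell_0$ and each $\ell_{ij}$ are $\det$-invariants, both $\ell_0^{q-1}$ and every element of $\B$ are genuine $\GL(W)$-invariants, so $F := \F_q(\{\ell_0^{q-1}\}\cup\B')$ sits inside $\F_q(mW)^{\GL(W)}$. By Theorem \ref{Steinberg1} it is enough to produce $\ell_{ij}/\ell_0$ inside $F$ for every $1\leq i\leq m$, $1\leq j\leq n$; whenever $(i,j)\neq(i_0,j_0)$ this is immediate, since $\ell_{ij}/\ell_0 = (\ell_0^{q-2}\ell_{ij})/\ell_0^{q-1}$ is a ratio of two elements of the claimed generating set. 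The problem therefore collapses to showing $\ell_{i_0 j_0}/\ell_0\in F$.

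Set $y_{ij}:=\ell_0^{q-2}\ell_{ij}$ and $y_0:=\ell_0^{q-1}$. Lemma \ref{lem2.7} translates, after clearing the appropriate powers of $\ell_0$, into the polynomial identity $\det(y_{ij})_{n\times n}=y_0^{n+1}$, which is linear in each entry $y_{ij}$. Expanding the determinant along row $i_0$ and isolating the $j=j_0$ term produces
\[
(-1)^{i_0+j_0}\, y_{i_0 j_0}\, Y_{i_0 j_0} \;=\; y_0^{n+1} \;-\; \sum_{j\neq j_0} (-1)^{i_0+j}\, y_{i_0 j}\, Y_{i_0 j},
\]
where $Y_{i_0 j}$ denotes the $(i_0,j)$-minor of $(y_{ij})$. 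Each such minor depends only on entries in rows different from $i_0$, and each $y_{i_0 j}$ with $j\neq j_0$ lies in $\B'$; so the right-hand side visibly belongs to $F$. As soon as $Y_{i_0 j_0}$ is nonzero in $\F_q(mW)$ we may divide, obtaining $y_{i_0 j_0}\in F$ and hence $\ell_{i_0 j_0}/\ell_0 = y_{i_0 j_0}/y_0 \in F$.

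The main obstacle is therefore non-vanishing of $Y_{i_0 j_0}$, equivalently of the $(i_0, j_0)$-minor $N_{i_0 j_0}$ of $M:=(\ell_{ij}/\ell_0)_{n\times n}$. To attack this I would exploit the Cramer-rule identity $L_0\cdot M^T = L_0^{(q)}$ (with $L_0^{(q)}$ the entry-wise Frobenius twist, as in Section \ref{Section2}), which upon inverting and transposing gives $M^{-1} = L_0^T\bigl((L_0^{(q)})^{-1}\bigr)^T$. A Laplace-expansion calculation of the $(j_0, i_0)$-entry of this product identifies the $(i_0,j_0)$-cofactor of $M$, up to a sign and a factor of $\ell_0$, with the determinant
\[
\wt{\ell}_{j_0 i_0}\;:=\;\det\bigl(X_1^q,\ldots,X_{i_0-1}^q,\,X_{j_0},\,X_{i_0+1}^q,\ldots,X_n^q\bigr)
\]
in the case $m\geq n$; in the case $m<n$ the analogous formula holds with the columns of $L_0$ substituted for $X_1,\ldots,X_n$.

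Non-vanishing of $\wt{\ell}_{j_0 i_0}$ is then verified by a direct specialization: setting $X_k=e_k$ for every $k\neq j_0$ and leaving $X_{j_0}$ generic, the standard-basis columns pin all rows except $\{i_0,j_0\}$, and a straightforward $2\times 2$ (or $1\times 1$ when $i_0=j_0$) computation reduces $\wt{\ell}_{j_0 i_0}$ to $\pm x_{j_0, j_0}$ in the case $i_0=j_0$ and to $\pm\bigl(x_{j_0, i_0}\, x_{j_0, j_0}^q - x_{j_0, i_0}^q\, x_{j_0, j_0}\bigr)$ in the case $i_0\neq j_0$, both obviously nonzero as polynomials. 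The hypothesis $i_0\leq\min\{m,n\}$ is precisely what ensures that row $i_0$ of $M$ is a genuine Cramer row in the construction of Section \ref{Section2} rather than one of the constant rows arising from powers of $X_m$ when $m<n$, so the same argument carries over to that case after choosing a specialization in which $\ell_0$ remains nonzero.
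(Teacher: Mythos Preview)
Your argument follows the same line as the paper's: both invoke Steinberg's Theorem to reduce the question to recovering the single missing ratio $\ell_{i_0j_0}/\ell_0$, and both appeal to the determinantal identity of Lemma~\ref{lem2.7} (equivalently $\det(\ell_{ij}/\ell_0)_{n\times n}=\ell_0^{q-1}$) to express that ratio in terms of the remaining generators. The paper's proof is considerably terser --- it simply asserts that Lemma~\ref{lem2.7} allows one to solve for $b/\ell_0^{q-1}$ rationally in the other elements of $\mathcal{A}'$, without pausing to check that the coefficient (the $(i_0,j_0)$-cofactor) is nonzero. You are more scrupulous and supply this verification via the Cramer identity $L_0 M^T=L_0^{(q)}$ and an explicit specialization.

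Two remarks. First, for $m\geq n$ there is a shorter route to the non-vanishing you work out: by Steinberg the $n^2$ elements $\ell_{ij}/\ell_0$ ($1\leq i,j\leq n$) generate a field of transcendence degree $n^2$, hence are algebraically independent, so the cofactor --- a nonzero polynomial in the entries from rows $\neq i_0$ --- cannot vanish. Second, your final sentence on the case $m<n$ is too optimistic: once $m<n$ the rows $m+1,\ldots,n$ of $M$ are standard basis vectors $e_{m+2},\ldots,e_n,e_m$, and if $j_0\in\{m,m+2,\ldots,n\}$ then deleting column $j_0$ kills one of these rows, so the $(i_0,j_0)$-cofactor really is zero and your division step fails. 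The paper's proof glosses over the same point, so this is not a divergence from the paper but a place where both arguments, read literally, leave something to be checked.
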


\begin{proof} Let $\A=\{\ell_{ij}/\ell_{0}\mid 1\leqslant i\leqslant m,1\leqslant j\leqslant n\}$.
We have proved that $\F_{q}(mW)^{\GL(W)}$ is generated by $\A$ over $\F_{q}$.
Since $\ell_{0}^{q-1}$ is a $\GL(W)$-invariant, it follows that $\F_{q}(mW)^{\GL(W)}$ is generated by $\A\cup\{\ell_{0}^{q-1}\}$ over $\F_{q}$. Lemma \ref{lem2.7} implies that $b/\ell_{0}^{q-1}$ can be expressed rationally by elements in $\A':=(\A\cup\{\ell_{0}^{q-1}\})\setminus \{b/\ell_{0}^{q-1}\}$. Thus $\F_{q}(mW)^{\GL(W)}$ is generated by $\A'$ over $\F_{q}$.
Let $E$ be the subfield of $\F_{q}(mW)$ generated by $\{\ell_{0}^{q-1}\}\cup\B'$ over $\F_{q}$. Clearly,
$E\subseteq \F_{q}(mW)^{\GL(W)}$. Note that every $\ell_{ij}/\ell_{0}=\ell_{0}^{q-2}\ell_{ij}/\ell_{0}^{q-1}$, thus
each element of $\A'$ is contained in $E$. Therefore, $\F_{q}(mW)^{\GL(W)}=E$.
\end{proof}

\begin{coro}\label{coro3.4}
Let $\D=\{\ell_{ij}\mid 1\leqslant i\leqslant m,1\leqslant j\leqslant n\}$,
$d\in \{\ell_{ij}\mid 1\leqslant i\leqslant \min\{m,n\}, 1\leqslant j\leqslant n\}$ be any element and $\D'=\D\setminus \{d\}$. Then
$\F_{q}(mW)^{\SL(W)}$ is generated by $\{\ell_{0}\}\cup\D'$ over $\F_{q}$.
\end{coro}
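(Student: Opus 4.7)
The plan is to reduce the $\SL(W)$-statement to Theorem \ref{thm3.3} by exploiting the degree-$(q-1)$ extension $\F_q(mW)^{\SL(W)}/\F_q(mW)^{\GL(W)}$ and by recycling the same ``multiply/divide by $\ell_0^{q-2}$'' trick that is already built into the definition of $\B$.

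First I would record the inclusion $\F_q(\{\ell_0\}\cup\D')\subseteq \F_q(mW)^{\SL(W)}$. Every $\ell_{ij}$ (and $\ell_0$) is a $\det$-invariant, as observed in Section \ref{Section2}, hence fixed by $\SL(W)$. Next I would identify $\F_q(mW)^{\SL(W)}$ with $\F_q(mW)^{\GL(W)}(\ell_0)$: since $[\GL(W):\SL(W)]=q-1$, Artin's theorem gives $[\F_q(mW)^{\SL(W)}:\F_q(mW)^{\GL(W)}]=q-1$; on the other hand the $\GL(W)$-orbit of the nonzero element $\ell_0$ is $\{a\ell_0\mid a\in\F_q^\times\}$, so the minimal polynomial of $\ell_0$ over $\F_q(mW)^{\GL(W)}$ has exactly $q-1$ distinct roots and the two extensions coincide.

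Then I would verify that $E:=\F_q(\{\ell_0\}\cup\D')$ contains the generating set $\{\ell_0^{q-1}\}\cup\B'$ of $\F_q(mW)^{\GL(W)}$ from Theorem \ref{thm3.3}. Clearly $\ell_0^{q-1}\in E$. For each $(i,j)\neq(i_0,j_0)$ the element $\ell_0^{q-2}\ell_{ij}$ lies in $E$ because both $\ell_0$ and $\ell_{ij}$ do; here it is essential that the omitted index pair $(i_0,j_0)$ is the same one indexing the missing element $b=\ell_0^{q-2}\ell_{i_0j_0}$ of $\B'$ and $d=\ell_{i_0j_0}$ of $\D'$ (both are required to satisfy $i_0\leqslant\min\{m,n\}$). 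Consequently $\F_q(mW)^{\GL(W)}\subseteq E$, and since $\ell_0\in E$ we obtain $\F_q(mW)^{\SL(W)}=\F_q(mW)^{\GL(W)}(\ell_0)\subseteq E$, yielding the desired equality.

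The only mild obstacle is justifying the degree equality $[\F_q(mW)^{\SL(W)}:\F_q(mW)^{\GL(W)}]=q-1$, which amounts to confirming that the fixed field of $\SL(W)$ inside $\F_q(mW)$ is exactly generated over the $\GL(W)$-fixed field by a single $\det$-semi-invariant; the orbit calculation on $\ell_0$ resolves this cleanly and avoids any appeal to descent theory. Everything else is bookkeeping with the identities $\ell_0^{q-2}\ell_{ij}=\ell_0^{q-2}\cdot\ell_{ij}$ and $\ell_{ij}=(\ell_0^{q-2}\ell_{ij})/\ell_0^{q-2}$, which translate generating sets between the $\B$-form and the $\D$-form.
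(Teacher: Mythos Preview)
Your proposal is correct and follows essentially the same route as the paper's proof: both establish $K:=\F_q(mW)^{\GL(W)}\subseteq E$ by matching the omitted element $d=\ell_{i_0j_0}$ with the omitted element $b=\ell_0^{q-2}\ell_{i_0j_0}$ in Theorem~\ref{thm3.3}, and both conclude by showing $[K(\ell_0):K]=q-1=[\F_q(mW)^{\SL(W)}:K]$. The only cosmetic difference is that the paper computes the degree of $\ell_0$ over $K$ by a direct root-counting argument on its minimal polynomial, whereas you invoke the Galois orbit of $\ell_0$ under $\GL(W)/\SL(W)\cong\F_q^\times$; these are the same computation in different clothing.
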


\begin{proof}
Let $K=\F_{q}(mW)^{\GL(W)}$ and $E$ denote the subfield of $\F_{q}(mW)$ generated by $\{\ell_{0}\}\cup\D'$ over $\F_{q}$. As $\ell_0$ and all $\ell_{ij}$ are $\SL(W)$-invariants, it follows from Theorem \ref{thm3.3} that $K\subset K(\ell_0)=E\subseteq \F_{q}(mW)^{\SL(W)}$. Let $e=\ell_0^{q-1}$. Then $e\in K$ and so $\ell_{0}$ is an algebraic element over $K$. 
Suppose $f(z)=z^{k}+a_{1}z^{k-1}+\dots+a_{k-1}z+a_{k}\in K[z]$ is the minimal polynomial of $\ell_{0}$ over $K$.
Then $\ell_{0}^{k}+a_{1}\ell_{0}^{k-1}+\dots+a_{k-1}\ell_{0}=-a_{k}\in K$. For any $\sigma\in\GL(W)$, setting $b=\det(\sigma)$, we obtain
$$\ell_{0}^{k}b^{k}+a_{1}\ell_{0}^{k-1}b^{k-1}+\dots+a_{k-1}\ell_{0}b=-a_{k}\in K.$$
As $b$ may run over  $\F_{q}^{\times}$, this means that the polynomial 
$$g(z):=\ell_{0}^{k}z^{k}+a_{1}\ell_{0}^{k-1}z^{k-1}+\dots+a_{k-1}\ell_{0}z+a_{k}\in E[z]$$
has at least $q-1$ distinct roots. Thus $k\geq q-1$. On the other hand, the fact that $\ell_{0}^{q-1}-e=0$
implies that $k\leq q-1$. Hence, $k=q-1$. Furthermore,   $[E:K]=[K(\ell_0):K]=q-1=[\GL(W):\SL(W)]=[\F_{q}(mW)^{\SL(W)}:K]$.
Hence, $\F_{q}(mW)^{\SL(W)}=E$, completing the proof.
\end{proof}

\begin{proof}[Proof of Corollary \ref{coro1.3}]
First of all, we note that the all two statements in Corollary \ref{coro1.3} are valid for the group $G=\U(W)$; see Campbell-Chuai \cite[Theorem 2.4]{CC2007}.

Now we suppose $G\in \{{\rm GL}(W),{\rm SL}(W)\}$. It follows from Bonnaf\'e-Kemper \cite[Example 1.2]{BK2011} that Theorem \ref{thm1.2} also holds if we replace $mW$ by $mW^{*}$ for
$G\in \{\GL(W),\SL(W)\}$. Combining this fact, Theorem \ref{thm1.2} and \cite[Theorem 1.2]{CW2019}
we see that the first statement holds. For the second statement, assume that $\F_{q}[mW\oplus dW^{*}]^{G}$ is generated by
$\{g_{1},g_{2},\dots,g_{k}\}$ over $\F_{q}$. Then there exist  $f,h_{i}\in \F_{q}[f_{1},f_{2},\dots,f_{(m+d)n}]$ such that
 $g_{i}=h_{i}/f$ for all $1\leqslant i\leqslant k$. Hence,
 $$\F_{q}[mW\oplus dW^{*}]^{G}\subseteq \F_{q}[h_{1},h_{2},\dots,h_{k}][f^{-1}]\subseteq 
 \F_{q}[f_{1},f_{2},\dots,f_{(m+d)n}][f^{-1}]$$
which implies $\F_{q}[mW\oplus dW^{*}]^{G}[f^{-1}]=\F_{q}[f_{1},f_{2},\dots,f_{(m+d)n}][f^{-1}]$.
\end{proof}

Usually, it is not easy to find an explicit polynomial $f\in \F_{q}[mW\oplus dW^{*}]^{G}$ such that $\F_{q}[mW\oplus dW^{*}]^{G}[f^{-1}]=\F_{q}[f_{1},f_{2},\dots,f_{(m+d)n}][f^{-1}]$; see for example \cite[Section 3]{CC2007} and \cite[Proposition 9]{CW2017}.  We close this section with an example for which $G=\U(W)$, $n=2$ and $m,d\in\N^{+}$.

\begin{exam}{\rm Consider $\F_{q}[mW\oplus dW^{*}]\cong\F_{q}[(\oplus_{j=1}^{m} W_{j})\oplus (\oplus_{k=1}^{d} W_{k}^{*})]$, where each $W_{j}\cong W$ and each $W_{k}^{*}\cong W^{*}$ as $\U(W)$-modules. Suppose $n=2$ and $m,d\in\N^{+}$.
Let $\F_{q}[W_{j}]^{\U(W)}=\F_{q}[f_{j1},f_{j2}]$ and $\F_{q}[W_{k}^{*}]^{\U(W)}=\F_{q}[f_{k1}^{*},f_{k2}^{*}]$ denote the Mui's invariants for $1\leqslant j\leqslant m$ and $1\leqslant k\leqslant d$. Then
$$\F_{q}[mW\oplus dW^{*}]^{\widetilde{\U}(W)}=\F_{q}[f_{j1},f_{j2},f_{k1}^{*},f_{k2}^{*}\mid 1\leqslant j\leqslant m,1\leqslant k\leqslant d]$$
where $\widetilde{\U}(W)$ denotes the direct product of $m+d$ copies of $\U(W)$.
We have seen in \cite[Theorem 3.5]{CW2019} that $\F_{q}(mW\oplus dW^{*})^{\U(W)}$ is minimally generated by
$$\A:=\{f_{11},f_{11}^{*},f_{12}^{*},u_{10}\}\cup\{f_{j1},u_{j0}\mid 2\leqslant j\leqslant m\} \cup\{f_{k1}^{*},v_{k0}\mid 2\leqslant k\leqslant d\},$$
see \cite[Section 3]{CW2019} for the definitions of $u_{j0}$ and $v_{k0}$.
Let $f:=f_{11}\cdot f_{11}^{*}$. To show  $$\F_{q}[mW\oplus dW^{*}]^{\U(W)}_{(f)}=\F_{q}[\A][f^{-1}],$$ it suffices to show that
$f_{12},\dots,f_{m2},f_{22}^{*},\dots,f_{d2}^{*}\in \F_{q}[\A][f^{-1}].$
For $1\leqslant j\leqslant m$, there exists a relation in $\F_{q}[W_{j}\oplus W_{1}^{*}]^{\U(W)}$:
$$u_{j0}^{q}-(f_{j1}f_{11}^{*})^{q-1}u_{j0}-f_{j1}^{q}f_{12}^{*}-f_{11}^{*q}f_{j2}=0;$$
see \cite[Theorem 2.4]{BK2011}. Thus all $f_{j2}\in \F_{q}[\A][f^{-1}]$. Similarly, considering  $\F_{q}[W_{1}\oplus W_{k}^{*}]^{\U(W)}$, we have another relation: $v_{k0}^{q}-(f_{11}f_{k1}^{*})^{q-1}v_{k0}-f_{11}^{q}f_{k2}^{*}-f_{k1}^{*q}f_{12}=0$ for $2\leqslant k\leqslant d$.
This implies that every $f_{k2}^{*}\in \F_{q}[\A][f^{-1}]$, as required.
}\end{exam}

\section{Symplectic, Unitary and Orthogonal Groups}\label{Section4}
\setcounter{equation}{0}
\renewcommand{\theequation}
{4.\arabic{equation}}
\setcounter{theorem}{0}
\renewcommand{\thetheorem}
{4.\arabic{theorem}}

For the symplectic groups,  let $W$ be an $2n$-dimensional vector space over ${\mathbb F}_{q}$. Let $K=(k_{ij})$ be a $2n\times 2n$ nonsingular alternate matrix, i.e., $k_{ij}=
-k_{ji}$ for $i\neq j$ and $k_{ii}=0$, and $Sp_{2n}({\mathbb F}_q,K)$ be the symplectic group of degree $2n$ with respect to $K$ over ${\mathbb F}_q$:
$$
Sp_{2n}({\mathbb F}_q,K)=\{T\in{\rm GL}(W)\mid TK{^tT}=K\}.
$$
For $1\leq i\leq m$, $1\leq j\leq 2n$ and $k\in{\mathbb N}^+$, we define
$$
Q_{ij}^{(k)}:=~^{t}X_{i}\cdot K\cdot X_{j}^{q^{k}}\in{\mathbb F}_{q}[mW]={\mathbb F}_{q}[x_{11},x_{12},\dots,x_{1,2n},x_{21},\dots,x_{m,2n}].
$$

For the unitary groups, suppose ${\mathbb F}_{q^2}$ has  odd characteristic. Let $W$ be an $n$-dimensional vector space over ${\mathbb F}_{q^2}$. There is an involution on ${\mathbb F}_{q^2}$: $a\mapsto \overline{a}=a^q$.
Let $H$ be an $n\times n$ nonsingular Hermitian matrix, i.e., $^t\overline{H}=H$, and $U({\mathbb F}_{q^2},H)$ be the unitary group of degree $n$ with respect to $H$
over ${\mathbb F}_{q^2}$:
$$
U({\mathbb F}_{q^2},H)=\{T\in{\rm GL}(W)\mid  TH{^t\overline{T}}=H\}.
$$
For $1\leq i\leq m$, $1\leq j\leq n$ and $k\in{\mathbb N}$, we define
$$
H_{ij}^{(k)}:=~^{t}X_{i}\cdot H\cdot X_{j}^{q^{2k+1}}\in{\mathbb F}_{q^2}[mW]={\mathbb F}_{q^2}[x_{11},x_{12},\dots,x_{1n},x_{21},\dots,x_{mn}].
$$

For the orthogonal groups, suppose ${\mathbb F}_{q}$ is of odd characteristic and $W$ is an $n$-dimensional vector space over ${\mathbb F}_{q}$.
 Let $A$ be an $n\times n$ nonsingular symmetric matrix  and $O_{n}({\mathbb F}_q,A)$ be the orthogonal group of degree $n$ with respect to $A$ over ${\mathbb F}_q$:
$$
O_{n}({\mathbb F}_q,A)=\{T\in{\rm GL}(W)\mid TA{^tT}=A\}.
$$
For $1\leq i\leq m$, $1\leq j\leq n$ and $k\in{\mathbb N}$, we define
$$
P_{ij}^{(k)}:=~^{t}X_{i}\cdot A\cdot X_{j}^{q^{k}}\in{\mathbb F}_{q}[mW]={\mathbb F}_{q}[x_{11},x_{12},\dots,x_{1n},x_{21},\dots,x_{mn}].
$$

The purpose of this section is to find a minimal generating set of polynomial invariants for ${\mathbb F}_{q}(mW)^{Sp_{2n}({\mathbb F}_q,K)}, {\mathbb F}_{q^2}(mW)^{U({\mathbb F}_{q^2},H)}$
and  ${\mathbb F}_{q}(mW)^{O_{n}({\mathbb F}_q,A)}$,
for $m\in{\mathbb N}^{+}$.

We suppose $W_{i}\cong W$ for $1\leq i\leq m$,  and we identify  ${\mathbb F}_{q}(\oplus_{i=1}^{m}W_{i})$ with
${\mathbb F}_{q}(mW)$. Clearly, every
$Q_{ij}^{(k)}\in {\mathbb F}_{q}(mW)^{Sp_{2n}({\mathbb F}_q,K)}$, $H_{ij}^{(k)}\in{\mathbb F}_{q^2}(mW)^{U({\mathbb F}_{q^2},H)}$ and  $P_{ij}^{(k)}\in{\mathbb F}_{q}(mW)^{O_{n}({\mathbb F}_q,A)}$.
On the other hand, for any $\sigma\in {\rm GL}(W)$, by Chu \cite[Lemma]{Chu1997}, $\sigma$ belongs to $Sp_{2n}({\mathbb F}_q,K)$,
$U({\mathbb F}_{q^2},H)$  and  $O_{n}({\mathbb F}_q,A)$ respectively if $\sigma$ fixes $Q_{ii}^{(k)}$, $H_{ii}^{(k)}$
and  $P_{ii}^{(k)}$ respectively  for some $k,i\geq 1$. Furthermore, by \cite[Theorem]{Chu1997}, ${\mathbb F}_{q}(W)^{Sp_{2n}({\mathbb F}_q,K)}$,
${\mathbb F}_{q^2}(W)^{U({\mathbb F}_{q^2},H)}$
 and  ${\mathbb F}_{q}(W)^{O_{n}({\mathbb F}_q,A)}$ are all purely transcendental:
\begin{eqnarray*}
{\mathbb F}_{q}(W)^{Sp_{2n}({\mathbb F}_q,K)}&=&{\mathbb F}_{q}(Q_{ii}^{(1)},Q_{ii}^{(2)},\ldots,Q_{ii}^{(2n)})\\
{\mathbb F}_{q^2}(W)^{U({\mathbb F}_{q^2},H)}&=&{\mathbb F}_{q^2}(H_{ii}^{(0)},H_{ii}^{(1)},\ldots,H_{ii}^{(n-1)})\\
{\mathbb F}_{q}(W)^{O_{n}({\mathbb F}_q,A)}&=&{\mathbb F}_{q}(P_{ii}^{(0)},P_{ii}^{(1)},\ldots,P_{ii}^{(n-1)}).
\end{eqnarray*}
It follows that, for example,
\begin{equation}
\label{ }
P_{ii}^{(k)}\in {\mathbb F}_{q}(P_{ii}^{(0)},P_{ii}^{(1)},\dots,P_{ii}^{(n-1)}),\textrm{ for all } k\geq n.
\end{equation}

The following is our main result in this section.

\begin{thm}\label{thm4.1}
{\rm (1)} Let $K$  be a $2n\times 2n$ nonsingular alternate matrix, $Sp_{2n}({\mathbb F}_q,K)$ be the symplectic group of degree $2n$ with respect to $K$ over ${\mathbb F}_q$ and $W$ be the standard representation of $Sp_{2n}({\mathbb F}_q,K)$. Then, for any $m\geq 1$,
$$
{\mathbb F}_{q}(mW)^{Sp_{2n}({\mathbb F}_q,K)}={\mathbb F}_{q}(Q_{i1}^{(k)}\mid 1\leq i\leq m,1\leq k\leq 2n).
$$
{\rm (2)}  Suppose ${\mathbb F}_{q}$ has odd characteristic. Let $H$ be an $n\times n$ nonsingular Hermitian matrix, $U({\mathbb F}_{q^2},H)$ be the unitary group of degree $n$ with respect to $H$
over ${\mathbb F}_{q^2}$ and $W$ be the standard representation of $U({\mathbb F}_{q^2},H)$. Then, for any $m\geq 1$,
$$
{\mathbb F}_{q^2}(mW)^{U({\mathbb F}_{q^2},H)}={\mathbb F}_{q^2}(H_{i1}^{(k)}\mid 1\leq i\leq m,0\leq k\leq n-1).
$$
{\rm (3)}  Suppose ${\mathbb F}_{q}$ is of odd characteristic.
 Let $A$ be an $n\times n$ nonsingular symmetric matrix, $O_{n}({\mathbb F}_q,A)$ be the orthogonal group of degree $n$ with respect to $A$ over ${\mathbb F}_q$
 and $W$ be the standard representation of $O_{n}({\mathbb F}_q,A)$.
Then
 $$
{\mathbb F}_{q}(mW)^{O_{n}({\mathbb F}_q,A)}={\mathbb F}_{q}(P_{i1}^{(k)}\mid 1\leq i\leq m,0\leq k\leq n-1).
$$
\end{thm}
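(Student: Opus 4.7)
The three statements of Theorem 4.1 are established by a single field-theoretic strategy; I outline the argument for the symplectic case (1), the other two being entirely analogous after substituting the relevant form and Frobenius indexing.

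Write $G=Sp_{2n}(\F_q,K)$ and let $E=\F_q(Q_{i1}^{(k)}\mid 1\leq i\leq m,\ 1\leq k\leq 2n)$ denote the subfield generated by the candidate generators. Each $Q_{i1}^{(k)}$ is $G$-invariant, so $E\subseteq \F_q(mW)^G$, and Artin's theorem immediately yields the lower bound
$$
[\F_q(mW):E]\ \geq\ [\F_q(mW):\F_q(mW)^G]=|G|.
$$
The plan is to establish the matching upper bound $[\F_q(mW):E]\leq |G|$, which will force $E=\F_q(mW)^G$.

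The key technical step is the identity $\F_q(mW)=\F_q(X_1)\cdot E$. This reduces to a Cramer-style linear recovery of each $X_i$ ($2\leq i\leq m$) from $X_1$ and the generators: the defining relations $Q_{i1}^{(k)}={}^tX_i\cdot(KX_1^{q^k})$ assemble into the linear system ${}^tX_i\cdot M=(Q_{i1}^{(1)},\ldots,Q_{i1}^{(2n)})$ with coefficient matrix $M=K\cdot(X_1^q,X_1^{q^2},\ldots,X_1^{q^{2n}})$. Since $\det(K)\neq 0$ and $\det(X_1^q,\ldots,X_1^{q^{2n}})=d_{2n,2n}(X_1)^q$ is a nonzero power of the top Dickson invariant, $M$ is invertible over $\F_q(X_1)$, and Cramer's rule places $X_i$ in $\F_q(X_1)\cdot E$. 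Granted this, the standard compositum-degree inequality gives
$$
[\F_q(mW):E]=[\F_q(X_1)\cdot E:E]\leq [\F_q(X_1):\F_q(X_1)\cap E].
$$
Chu's theorem for a single copy gives $\F_q(X_1)^G=\F_q(Q_{11}^{(1)},\ldots,Q_{11}^{(2n)})\subseteq E$, hence $\F_q(X_1)^G\subseteq \F_q(X_1)\cap E$ and $[\F_q(X_1):\F_q(X_1)\cap E]\leq [\F_q(X_1):\F_q(X_1)^G]=|G|$. Together with the lower bound above, this forces equality and thus $E=\F_q(mW)^G$.

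The unitary and orthogonal cases are handled identically: the relevant coefficient matrix is built from $(X_1^q,X_1^{q^3},\ldots,X_1^{q^{2n-1}})$ in the unitary case (with determinant $d_{n,n}(X_1)^q$ over $\F_{q^2}$) and from $(X_1,X_1^q,\ldots,X_1^{q^{n-1}})$ in the orthogonal case (with determinant $d_{n,n}(X_1)$); in each case Chu's theorem provides the single-copy identification of the invariant field, and the remaining field-theoretic bookkeeping is verbatim the same. The main obstacle is precisely the linear-recovery step — verifying that the Dickson-like coefficient matrix is invertible over the appropriate $\F_{q^{\pm}}(X_1)$. Once this geometric input is in hand, the rest is pure field theory and runs uniformly for all three classical groups.
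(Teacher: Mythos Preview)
Your argument is correct and takes a genuinely different route from the paper's. The paper first enlarges $E$ to an auxiliary field $E^*=E(P_{1j}^{(1)}\mid 2\leq j\leq n)$ (in the orthogonal case), shows by a determinant trick that every Steinberg generator $\ell_{ij}/\ell_0$ lies in $E^*$, invokes Theorem~\ref{Steinberg1} to get $\F_q(mW)^{\GL(W)}\subseteq E^*$, and then uses Chu's lemma plus Galois theory to identify $E^*$ with the full invariant field; a second determinant identity (equations (4.2)--(4.3)) is then needed to collapse $E^*$ back to $E$. Your approach bypasses Steinberg's theorem and the intermediate field entirely: the Cramer-style recovery of $X_2,\ldots,X_m$ from $X_1$ and the $Q_{i1}^{(k)}$ gives $\F_q(mW)=\F_q(X_1)\cdot E$ directly, and then a pure degree count against Chu's single-copy result pins down $E$. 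Your proof is shorter and more self-contained---it needs only Chu's theorem and basic compositum inequalities---whereas the paper's argument has the advantage of tying the classical-group invariant fields explicitly back to the $\GL(W)$ picture developed in Sections~2--3, which is thematically consistent with the rest of the article.
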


\begin{proof} Let us prove (3) firstly.
We suppose $m\geq n$. Let $E={\mathbb F}_{q}(P_{i1}^{(k)}\mid 1\leq i\leq m,0\leq k\leq n-1)$ and $E^*=E(P_{1j}^{(1)}\mid 2\leq j\leq n)$. Note that $A$ is symmetric. For $1\leq j\leq n$ and $1\leq i\leq m$, we have
\begin{eqnarray*}
 \frac{\ell_{ij}}{\ell_{0}}& = & \frac{\det((X_{1},\dots,X_{j-1},\widetilde{X_{i}},X_{j+1},\dots,X_{n}))}{\det(X_{1},X_{2},\dots,X_{n})} \\
& = &  \frac{\det\left(^{t}(X_{1},\dots,X_{j-1},\widetilde{X_{i}},X_{j+1},\dots,X_{n})\cdot A\cdot (X_1,X_1^q,\dots, X_1^{q^{n-1}})\right)}{\det\left(^{t}(X_{1},X_{2},\dots,X_{n})\cdot A\cdot (X_1,X_1^q,\dots, X_1^{q^{n-1}})\right)} \\
&\in& E^*,
\end{eqnarray*}
where $\widetilde{X_{i}}=\begin{cases}
X_{i}^{q}, & \text{if } i\leq n; \\
X_{i}, & \text{if } n+1\leq i\leq m
\end{cases}$.
By Theorem \ref{Steinberg1}, we see that ${\mathbb F}_{q}(mW)^{{\rm GL}(W)}\subseteq E^*$, which both are contained in ${\mathbb F}_{q}(mW)^{O_{n}({\mathbb F}_q,A)}$.
Note that \cite[Lemma]{Chu1997} shows that for any $\sigma\in {\rm GL}(W)$, $\sigma\in O_{n}({\mathbb F}_q,A)$ if
$\sigma(P_{11}^{(k)})=P_{11}^{(k)}$ for some $k\geq 1$. Applying Galois theory, we have
${\mathbb F}_{q}(mW)^{O_{n}({\mathbb F}_q,A)}=E^*$.
Thus, to show ${\mathbb F}_{q}(mW)^{O_{n}({\mathbb F}_q,A)}=E$, it suffices to show that $E=E^*$, i.e., $P_{1j}^{(1)}\in E$ for $2\leq j\leq n.$
Let $\widetilde{X}=(X_1,X_1^q,\dots, X_1^{q^{n-2}},X_j)$ for a fixed $j\in\{2,3,\dots,n\}$. Then
\begin{eqnarray}\label{eq4.2}
& &\left(\det(^{t}\widetilde{X}\cdot A\cdot (X_1,X_1^q,\dots, X_1^{q^{n-1}}))\right)^q\nonumber\\
&=&\left(\det(A)\det(^{t}(X_1,X_1^q,\dots, X_1^{q^{n-1}})\cdot A\cdot (X_1,X_1^q,\dots, X_1^{q^{n-1}}))\right)^{\frac{q-1}{2}}\cdot\\
&&\det\left(^{t}\widetilde{X}^{(q)}\cdot A\cdot (X_1,X_1^q,\dots, X_1^{q^{n-1}})\right),\nonumber
\end{eqnarray}
where $\widetilde{X}^{(q)}=(X_1^q,X_1^{q^2},\dots, X_1^{q^{n-1}},X_j^q)$.
Clearly, $\det(^{t}\widetilde{X}\cdot A\cdot (X_1,X_1^q,\dots, X_1^{q^{n-1}}))$ and 
$$\det(^{t}(X_1,X_1^q,\dots,X_1^{q^{n-1}})\cdot A\cdot(X_1,X_1^q,\dots, X_1^{q^{n-1}}))$$ both belong to $E$, and furthermore,
\begin{equation}
\label{eq4.3}
\det\left(^{t}\widetilde{X}^{(q)}\cdot A\cdot(X_1,X_1^q,\dots, X_1^{q^{n-1}})\right)=
\det\begin{pmatrix}
P_{11}^{(1)}&P_{11}^{(0)q}&\cdots&P_{11}^{(n-2)q}\\
\vdots&\vdots&\cdots&\vdots\\
P_{11}^{(n-1)}&P_{11}^{(n-2)q}&\cdots&P_{11}^{(0)q^{n-1}}\\
P_{1j}^{(1)}&P_{j1}^{(0)q}&\cdots&P_{j1}^{(n-2)q}
\end{pmatrix}.
\end{equation}
Combining (\ref{eq4.2}) and (\ref{eq4.3}), we see that $P_{1j}^{(1)}\in E$ for all $2\leq j\leq n.$
This confirms the statement for the case $m\geq n$.

Together with the constructions of $\ell_{ij}$ and $\ell_{0}$ in Section 2, an analogous argument can apply for the remaining case $m<n$. This completes the proof of (3).

(1) and (2). From the proof of (3),  it is enough to show that every $\frac{\ell_{ij}}{\ell_0}$ belongs to
${\mathbb F}_{q}(Q_{i1}^{(k)}\mid 1\leq i\leq m,1\leq k\leq 2n)$ and ${\mathbb F}_{q^2}(H_{i1}^{(k)}\mid 1\leq i\leq m,0\leq k\leq n-1)$. However, they are true as we will see. In the case (1),
\begin{eqnarray*}
 \frac{\ell_{ij}}{\ell_{0}}& = & \frac{\det((X_{1},\dots,X_{j-1},\widetilde{X_{i}},X_{j+1},\dots,X_{2n}))}{\det(X_{1},X_{2},\dots,X_{2n})} \\
& = &  \frac{\det\left(^{t}(X_{1},\dots,X_{j-1},\widetilde{X_{i}},X_{j+1},\dots,X_{2n})\cdot K\cdot (X_1^q,X_1^{q^2},\dots, X_1^{q^{2n}})\right)}{\det\left(^{t}(X_{1},X_{2},\dots,X_{2n})\cdot K\cdot (X_1^q,X_1^{q^2},\dots, X_1^{q^{2n}})\right)},
\end{eqnarray*}
where $\widetilde{X_{i}}=\begin{cases}
X_{i}^{q}, & \text{if } i\leq n; \\
X_{i}, & \text{if } n+1\leq i\leq m
\end{cases}$. The statement follows from
$$
^tX_i^q\cdot K\cdot  (X_1^q,X_1^{q^2},\dots, X_1^{q^{2n}})=(0,(Q_{i1}^{(1)})^q,\ldots,(Q_{i1}^{(2n-1)})^q).
$$

In the case (2),
\begin{eqnarray*}
 \frac{\ell_{ij}}{\ell_{0}}& = & \frac{\det((X_{1},\dots,X_{j-1},\widetilde{X_{i}},X_{j+1},\dots,X_{n}))}{\det(X_{1},X_{2},\dots,X_{n})} \\
& = &  \frac{\det\left(^{t}(X_{1},\dots,X_{j-1},\widetilde{X_{i}},X_{j+1},\dots,X_{n})\cdot H\cdot (X_1^q,X_1^{q^3},\dots, X_1^{q^{2n-1}})\right)}{\det\left(^{t}(X_{1},X_{2},\dots,X_{n})\cdot H\cdot (X_1^q,X_1^{q^3},\dots, X_1^{q^{2n-1}})\right)},
\end{eqnarray*}
where $\widetilde{X_{i}}=\begin{cases}
X_{i}^{q^2}, & \text{if } i\leq n; \\
X_{i}, & \text{if } n+1\leq i\leq m
\end{cases}$. We have to show that $~^tX_j^{q^2}\cdot H\cdot X_1^q\in{\mathbb F}_{q^2}(H_{i1}^{(k)}\mid 1\leq i\leq m,0\leq k\leq n-1)$ for $2\leq j\leq n$.
Similar to (\ref{eq4.2}), one has
\begin{eqnarray}\label{eq4.4}
& &\left(\det(^{t}\widetilde{X}\cdot H\cdot (X_1^q,X_1^{q^3},\dots, X_1^{q^{2n-1}}))\right)^{q^2}\nonumber\\
&=&\left(\det(H)\det(^{t}(X_1^q,X_1^{q^3},\dots, X_1^{q^{2n-1}})\cdot H\cdot (X_1^q,X_1^{q^3},\dots, X_1^{q^{2n-1}}))\right)^{\frac{q^2-1}{2}}\cdot\\
&&\det\left(^{t}\widetilde{X}^{(q^2)}\cdot H\cdot (X_1^q,X_1^{q^3},\dots, X_1^{q^{2n-1}})\right),\nonumber
\end{eqnarray}
where $\widetilde{X}=(X_1^{q},X_1^{q^3},\dots, X_1^{q^{2n-3}},X_j)$. Then the same argument gives the required conclusion for $~^tX_j^{q^2}\cdot H\cdot X_1^q$.
\end{proof}

Note that Tang-Wan \cite{TW2006} has already found a generating set of polynomial invariants for the invariant field
$\F_{q}(W)^{O(W)}$ where $O(W)$ denotes the orthogonal group of even characteristic.

We conclude this paper with the following remark.

\begin{rem}{\rm
Suppose $G$ denotes any finite classical group over a finite field ${\mathbb F}_{q}$ with the standard representation $W$. Let $W^{*}$ denote the dual space of $W$ and $m,d\in{\mathbb N}$.
Our approach, together with the method appeared  in \cite{CW2019}, might be workable to find homogeneous polynomial invariants
$f_{1},f_{2},\dots,f_{(m+d)n}\in{\mathbb F}_{q}[mW\oplus dW^{*}]^{G}$ such that
$ {\mathbb F}_{q}(mW\oplus dW^{*})^{G}={\mathbb F}_{q}(f_{1},f_{2},\dots,f_{(m+d)n})$.
}\end{rem}

\section*{Acknowledgments} 
The first author was supported by SAFEA (P182009020) and  FRFCU (2412017FZ001).
The first author would like to thank David L. Wehlau for many interesting conversations on \cite{Ste1987} and Gregor Kemper for helpful comments on the rationality problem of vector invariant fields.
The second author was supported by National Natural  Science Foundation of China (11471234).


\end{document}